\newcommand{\bZ}{{\mathbb Z}}
\newcommand{\bF}{{\mathbb F}}
\newcommand{\bA}{{\mathbb A}}
\newcommand{\bC}{{\mathbb C}}
\newcommand{\bQ}{{\mathbb Q}}
\newcommand{\bP}{{\mathbb P}}
\newcommand{\bG}{{\mathbb G}}
\newcommand{\ra}{{\rangle}}
\newcommand{\la}{{\langle}}
\newtheorem{thm}{Theorem}[section]
\newtheorem{lemma}[thm]{Lemma}
\newtheorem{cor}[thm]{Corollary}
\newtheorem{prop}[thm]{Proposition}
\numberwithin{equation}{section}
\begin{document}

\title[cohomology of $GL_n(\bF_q)$]{
The etale cohomology of the general linear group over a finite field
and the Deligne and Lusztig variety}
 
\author{M.Tezuka and N.Yagita}

\address{ Department of mathematics, Faculty of Science, Ryukyu University, Okinawa, Japan\\
Department of Mathematics, Faculty of Education, 
Ibaraki University,
Mito, Ibaraki, Japan}
 
\email{ tez@sci.u-ryukyu.ac.jp,  yagita@mx.ibaraki.ac.jp, }
\keywords{Deligne-Lusztig variety, classifying spaces, motivic cohomology}
\subjclass[2000]{Primary 11E72, 12G05; Secondary 55R35}

\begin{abstract}
Let $p\not =\ell$ be primes. 
We study the etale cohomology $H^{*}_{et}(BGL_n(\bF_{p^s});\bZ/{\ell})$
over 
the algebraically closed field $\bar \bF_p$
by using the stratification methods from Molina-Vistoli.
To compute this cohomology, we use the Delinge-Lusztig variety.
\end{abstract}

\maketitle

\section{Introduction}
Let $p$ and $\ell$ be  primes with 
$p \not =\ell$.
Let $G_n=GL_n(\bF_q)$ 
the general linear group over a finite
field $\bF_q$ with $q=p^s$.  
Then Quillen computed
the cohomology of this group in the famous paper [Qu].
\begin{thm} (Quillen [Qu])
Let $r$ be the smallest number such that $q^r-1=0\ mod(\ell)$.  
Then we have an isomorphism
\[ H^*(BG_n;\bZ/{\ell})\cong \bZ/\ell[c_r,...,c_{r[n/r]}]
\otimes \Delta(e_r,...,e_{r[n/r]})\quad (1.1)\]
where $|c_{rj}|=2rj,$ $|e_{rj}|=2rj-1$.
\end{thm}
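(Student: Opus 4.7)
My plan is to follow Quillen's strategy via the Brauer lift and Adams operations. The central input is the existence, after completion at $\ell$, of a homotopy fiber sequence
\[ BGL_n(\bF_q) \longrightarrow BU(n) \xrightarrow{\;\psi^q - 1\;} BU(n), \]
where $\psi^q$ denotes the $q$-th Adams operation. I would take this as given, since it is the heart of Quillen's paper and relies on a delicate analysis of the modular representation theory of $GL_n(\bF_q)$ together with Brauer character theory.

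Granting the fiber sequence, I would compute $H^*(BGL_n(\bF_q); \bZ/\ell)$ from the Eilenberg-Moore spectral sequence of the associated pullback square; its $E_2$-page is
\[ E_2 = \mathrm{Tor}_R^*(R,\bZ/\ell), \qquad R = \bZ/\ell[c_1,\dots,c_n], \]
where the left copy of $R$ is regarded as an $R$-module through $(\psi^q - 1)^*$. To identify this Tor, I would use that $\psi^q(c_k) \equiv q^k c_k$ modulo decomposables, so that $\psi^q - 1$ sends $c_k$ to $(q^k-1)c_k$ modulo the square of the augmentation ideal. By the definition of $r$, this scalar is a unit in $\bZ/\ell$ exactly when $r \nmid k$ and vanishes exactly when $r \mid k$. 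A change of polynomial generators replaces $\{c_k : r \nmid k\}$ by the regular sequence $\{\psi^q(c_k) - c_k : r \nmid k\}$, so the Koszul complex yields a polynomial quotient $\bZ/\ell[c_r, c_{2r}, \dots, c_{r[n/r]}]$ in internal Tor-degree zero, accounting for the polynomial factor of the answer. The remaining operators $\psi^q(c_{rj}) - c_{rj}$, which lie in strictly higher filtration, contribute $\mathrm{Tor}^1$-classes of internal degree $2rj$; these produce the exterior generators $e_{rj}$ of total degree $2rj - 1$ on the $E_\infty$-page.

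The main obstacle is the Brauer-lift fiber sequence itself, which encapsulates Quillen's theorem on the $K$-theory of finite fields. Once available, the downstream calculation is Koszul homological algebra, and the Eilenberg-Moore spectral sequence should collapse at $E_2$ for degree reasons. A secondary technical point is to rule out nontrivial multiplicative extensions between the polynomial and exterior generators, so that the ring structure is genuinely a tensor product $\bZ/\ell[c_r,\dots,c_{r[n/r]}] \otimes \Delta(e_r,\dots,e_{r[n/r]})$; this I would verify by restricting along the inclusion of a suitable diagonally split subgroup (where the cohomology is transparent) and checking injectivity on the polynomial factor.
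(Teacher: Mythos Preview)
Your proposal is a faithful outline of Quillen's original argument, and as such it is correct. However, it is precisely the approach this paper sets out to \emph{avoid}: the introduction states explicitly that the goal is an ``elementary algebraic proof'' that dispenses with the Brauer-lift fiber sequence, the Adams operation $\psi^q$, and the Eilenberg--Moore spectral sequence altogether.

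The paper instead works in \'etale cohomology over $k=\bar\bF_p$ and proceeds by a double induction. The key geometric input is the Deligne--Lusztig variety $Q'=\{x\in\bA^n\mid e_n(x)^{q-1}=1\}$, for which one proves directly that $Q'/G_n\cong\bA^{n-1}$ (Theorem~2.5); this gives $H^*_{G_n}(X(1))\cong\Lambda(f)$ with $|f|=1$ (Lemma~3.1). One then stratifies $\bA^n$ by the singular loci $F(i)$ of the hyperplane arrangement $\{e_n=0\}$, and the localization long exact sequences for $X(i)\subset X(i+1)$ are analyzed inductively on $i$ and $n$, each stratum contributing $H^*(BG_i)\otimes\Lambda(f)$ by Lemma~3.2. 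The dichotomy $\delta(e_i)\in\{0,1\}$ is settled by deciding whether $c_i$ survives, which is controlled by restriction to the twisted maximal torus $T(w)^F\cong\bF_{q^r}^*$ (Proposition~4.2).

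What each approach buys: yours packages all the arithmetic of $r$ into the single map $\psi^q-1$ and reduces to Koszul homological algebra, but leans on the Brauer-lift theorem as a black box. The paper's route is self-contained within algebraic geometry, makes the link to Deligne--Lusztig varieties explicit, and carries over verbatim to motivic cohomology (Theorem~1.2), where no analogue of the $\psi^q-1$ fiber sequence is available.
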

To prove this theorem,  Quillen used the topological arguments, for example,
the Eilenberg-Moore spectral sequences, and spaces of the kernel of
the map $\psi^q-1$ defined by the Adams operation.
In this paper, we give an elementary algebraic proof for this theorem,
in the sense without using the above topological arguments.

By induction on $n$ and  the equivariant cohomology theory (stratified methods)
from Molina and Vistoli [Mo-Vi], [Vi],
we can compute the etale cohomology over $k=\bar \bF_p$, i.e.,
$H_{et}^*(BG_n;\bZ/{\ell})\cong (1.1)$. 
Then the base change 
theorem implies
the Quillen theorem.

The Molina and Vistoli stratified methods also work for the motivic cohomology.
Let $H^{*,*'}(-;\bZ/{\ell})$ be the motivic cohomology over the field 
$\bar \bF_p$ and $0\not =\tau\in H^{0,1}(Spec(\bar\bF_p);\bZ/{\ell})$.
\begin{thm} We have an isomorphism $H^{*,*'}(G_n;\bZ/{\ell})\cong
\bZ/{\ell}[\tau]\otimes(1.1)$ with degree
$deg(c_{rj})=(2rj,rj)$ and $deg(e_{rj})=(2rj-1,rj)$.
\end{thm}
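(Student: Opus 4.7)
The plan is to lift the etale argument sketched above to motivic cohomology by carrying the weight grading through every step. The basic mechanism is the motivic Gysin long exact sequence: for a smooth closed subvariety $Z\subset X$ of codimension $c$ with open complement $U$, one has
\[
\cdots \to H^{*-2c,\,*'-c}(Z;\bZ/\ell) \to H^{*,*'}(X;\bZ/\ell) \to H^{*,*'}(U;\bZ/\ell) \to \cdots,
\]
so that each closed stratum of codimension $c$ contributes with weight shifted up by exactly $c$. This is the arithmetic needed to place the generators $c_{rj}$ and $e_{rj}$ into bidegrees $(2rj,rj)$ and $(2rj-1,rj)$, respectively.

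I would run the same induction on $n$ used for the etale computation. The base case $n<r$ is immediate, since the $\ell$-Sylow subgroup of $G_n$ is then trivial, and $H^{*,*'}(BG_n;\bZ/\ell)$ reduces to the coefficient ring $\bZ/\ell[\tau]$. For the inductive step, the Molina-Vistoli stratification decomposes an appropriate geometric approximation of $BG_n$ into locally closed pieces: the closed strata are classifying spaces of smaller Levi-type subgroups (to which the inductive hypothesis applies), while the new contribution is produced from the Deligne-Lusztig variety $X_w$ attached to the Coxeter element. The even generator $c_{rj}$ is the $rj$-th motivic Chern class of (a Frobenius twist of) the tautological representation, hence lives by construction in bidegree $(2rj,rj)$. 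The odd generator $e_r$ arises from the top-dimensional motivic cohomology of the $r$-dimensional variety $X_w$, placing it in bidegree $(2r-1,r)$; the remaining $e_{rj}$ appear as its transgressions in the iterated Gysin sequences, each acquiring weight $rj$.

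The main obstacle is to show that the motivic Gysin sequences split, so that the ring genuinely is the free tensor product $\bZ/\ell[\tau]\otimes(1.1)$ rather than a twisted extension. In the etale setting this is forced by a Poincare-series comparison once the additive structure is in hand; in the motivic setting the additional input is that every generator sits in a Tate-type bidegree (with $2*'-*\in\{0,1\}$), so the long exact sequence decomposes as a sequence of free $\bZ/\ell[\tau]$-modules and the boundary maps vanish for weight reasons. The multiplicative structure then follows by comparison with the etale answer after inverting $\tau$: any potentially nontrivial relation between products would already be visible in $H^*_{et}(BG_n;\bZ/\ell)$, where Theorem 1.1 excludes it.
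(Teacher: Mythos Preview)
Your overall framework --- running the Gysin stratification of Section~3 in motivic cohomology and tracking the weight shifts --- is exactly what the paper does. But two of your specific mechanisms are wrong, and one of them is a genuine gap.

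First, the Deligne--Lusztig variety $\tilde X(\dot w)$ (the paper's $Q'$) is a hypersurface in $\bA^n$, hence $(n-1)$-dimensional, not $r$-dimensional; the primitive odd class is not its top-degree class. What actually happens (Lemma~3.1) is that $Q'/G_n\cong\bA^{n-1}$, so $X(1)/G_n\cong\bA^{n-1}\times_{\mu_{q^n-1}}\bG_m$, and the class $f$ lives in bidegree $(1,1)$, coming from the $\bG_m$ factor. The paper's one genuinely motivic-specific observation (Section~5) is that this projection to $\bG_m$ is an $\bA^1$-homotopy equivalence, so Lemmas~3.1 and 3.2 hold motivically; you do not mention this, and it is the only step that requires new input beyond Section~3. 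Each $e_{i+1}$ is then $i_*(f)$ under the Gysin push-forward from the codimension-$i$ stratum, landing in $(2i+1,i+1)$.

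Second, and more seriously, the boundary maps do \emph{not} all vanish, and your ``vanish for weight reasons'' argument cannot work: $e_i\in H^{2i-1,i}_{G_n}(X(i))$, so $\delta(e_i)$ lands in $H^{0,0}_{G_n}(F(i)-F(i+1))$, which is $\bZ/\ell$. In fact, for $r\nmid i$ the paper shows $\delta(e_i)=1$ (proof of Theorem~3.4), and this is precisely what kills the unwanted classes $c_i,e_i$ and leaves only the $c_{rj},e_{rj}$ in the final answer. If all boundaries vanished you would obtain the polynomial--exterior algebra on \emph{all} $c_i,e_i$, which is wrong for $r\ge 2$. The correct statement is that the motivic boundary maps exhibit the same pattern of vanishing and nonvanishing as the \'etale ones in Section~3; this is what the paper means by ``most arguments in the preceding sections also work.'' Your Tate-bidegree observation is relevant, but what it buys you is that realization to \'etale cohomology detects these differentials, not that they are zero.
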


To compute the equivariant cohomology, we consider the $G_n$-variety
\[ Q'=Spec(k[x_1,...,x_n]/((-1)^{n-1}det(x_i^{q^{j-1}})^{q-1}=1)),\]
and prove  $Q'/G_n\cong \bA^{n-1}$.  This implies the equivariant cohomology
\[H_{G_n}^*(Q'\times _{\mu_{q^n-1}}\bG_m;\bZ/p)\cong \Delta(f),\quad |f|=1.\]
The computation of the above isomorphism is the crucial point to compute
$H_{G_n}^*(pt.;\bZ/\ell)\cong H^*(BG_n;\bZ/\ell)$.

Let $G$ be a connected reductive algebraic group defined over a finite field ${\bF}_q$, $q=p^r$, let $F\colon G\rightarrow G$ be the Frobenius and let $G^F$ be the (finite) group of fixed points of $F$ in $G$, e.g., $GL_n^F=G_n$ in our notation. 
In the paper [De-Lu], Deligne and Lusztig
studied the representation theory of $G^F$ over fields of characteristic 0. The main idea is to construct such representations in the $\ell$-adic cohomology spaces 
$ H_c{}^*(\tilde X(\dot w),\bQ_{\ell})$
of certain algebraic varieties $\tilde X(\dot w)$ over ${\bF}_q$, on which $G^F$ acts.
(see $\S 6$ for the definition of $\tilde X(\dot w)$.)
 
For the $G=GL_n$ and$w=(1,\cdots,n)$, we see that 
$Q'\cong \tilde X(\dot w)$.
One of our theorems is to show $\tilde X(\dot w)/G_n\cong \bA^{n-1}$ for the above case by 
completely different arguments. 
The authors thank to Masaharu Kaneda and Shuichi Tsukuda for their useful suggestions.

\section{Dickson Invariants}

At first, we recall the Dickson algebra.  Let us write $G_n=GL_n(\bF_q)$.
The Dickson algebra is the invariant ring of a polynomial of $n$ variables
under the usual $G_n$-action, namely,
\[ \bF_q[x_1,..,x_n]^{G_n}=\bF_q[c_{n,0},c_{n,1},...,c_{n,n-1}]\]
where  each $c_{n,i}$ is defined by
\[\sum c_{n,i}X^{q^i}=\prod_{x\in \bF_q\{x_1,...,x_n\}}(X+x)=
\prod_{(\lambda_1,...,\lambda_n)\in\bF_q^{\times n}}(X+\lambda_1 x_1+...\lambda_n x_n)\]
Hence the degree $|c_{n,i}|=q^n-q^i$ letting $|x_i|=1$. 
Let us write  $e_n=c_{n,0}^{1/(q-1)}$, namely, 
\[ e_n=(\prod_{0\not =x\in \bF_q\{x_1,...,x_n\}}(x))^{1/(q-1)}
=\left|\begin{array}{cccc}
x_1&x_1^q&\ldots&x_1^{q^{n-1}}\\
x_2&x_2^q&\ldots&x_2^{q^{n-1}}\\
\multicolumn{4}{c}{\dotfill}\\
x_n&x_n^q&\ldots&x_n^{q^{n-1}}
\end{array}\right|.\]
Then  each $c_{n,i}$ is
written as
   \[ c_{n,s}=\left| \begin{array}{ccccc}
     x_1 &\ldots& \hat x_1^{q^s}& \ldots &  x_1^{q^{n}}\\
     x_2 & \ldots&\hat x_2^{q ^s}& \ldots &  x_2^{q^{n}}\\
           \multicolumn{5}{c}{\dotfill} \\
              x_n & \ldots &\hat x_n^{q^s} & \ldots &  x_n^{q^{n}}
       \end{array} \right|/e(x).\]
Note that the Dickson algebra for $SG_n=SL_n(\bF_q)$ is given as
\[ \bF_q[x_1,..,x_n]^{SG_n}=\bF_q[e_n,c_{n,1},...,c_{n,n-1}].\]

Let us write  $k=\bar \bF_p$. We consider the algebraic variety
\[F=Spec(k[x_1,...,x_n]/(e_n)).\]
We want to study  the $G_n$-space structure of $X=X(n)=\bA^n-\{0\}$ and 
$X(1)=X-F$.
For this, we consider the following variety
(the Deligne-Lusztig variety for $w=(1,...,n)$, see $\S 6$ for details)
\[Q=Spec(k[x_1,...,x_n]/(e_n-1)).\]

\noindent {\bf Example.}  When $q=p$ and $n=2$, we see
   \[ Q=\{(x,y)|x^py-xy^p=1\}\subset \bA^2,\]
   \[ F=\{(x,y)|x^py-xy^p=0\}=\cup _{i\in \bF_p\cup\{\infty\}}F_{i}\]
where $F_i=\{(x,ix)|x\in k\}$ and $F_{\infty}=\{(0,x)|x\in k\}$.
   
   The corresponding projective variety $\bar Q$ is written
\[\bar Q=Proj(k[x_0,...,x_n]/(e_n=x_0^{1+q+..+q^{n-1}})).\]
\begin{lemma}
Let us write $q(n)=1+q+...+q^{n-1}=(q^n-1)/(q-1)$.  Then we have an isomorphism
$Q\times_{\mu_{q(n)}}\bG_m\cong X(1)$ of varieties.
\end{lemma}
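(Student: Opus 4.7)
The plan is to realize the natural morphism
\[\pi\colon Q\times\bG_m\longrightarrow X(1),\qquad (x,t)\longmapsto (tx_1,\ldots,tx_n),\]
as the Kummer $\mu_{q(n)}$-torsor obtained by pulling back the $q(n)$-th power map on $\bG_m$ along $e_n\colon X(1)\to\bG_m$, and then to identify the quotient by $\mu_{q(n)}$ with $X(1)$.

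First I would verify that $\pi$ is well-defined: since $e_n$ is homogeneous of degree $q(n)$ and $e_n(x)=1$ on $Q$, one has $e_n(\pi(x,t))=t^{q(n)}\in k^{\times}$, so the image avoids $F=V(e_n)$ (and in particular the origin). The group $\mu_{q(n)}$ acts on $Q\times\bG_m$ by $\zeta\cdot(x,t)=(\zeta x,\zeta^{-1}t)$; this preserves $Q$ (again because $e_n$ has degree $q(n)$) and leaves $\pi$ invariant, so it is the correct action whose quotient defines $Q\times_{\mu_{q(n)}}\bG_m$.

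The crux is to construct a $\mu_{q(n)}$-equivariant isomorphism of $X(1)$-schemes
\[Q\times\bG_m\;\xrightarrow{\;\sim\;}\;X(1)\times_{\bG_m}\bG_m,\qquad (x,t)\longmapsto (tx,t),\]
where the fiber product is formed via $e_n\colon X(1)\to\bG_m$ on the left and the $q(n)$-power map $\bG_m\to\bG_m$ on the right. The inverse $(y,t)\mapsto(y/t,t)$ lands in $Q\times\bG_m$ since $e_n(y/t)=e_n(y)/t^{q(n)}=1$. Under this identification $\mu_{q(n)}$ acts on the right-hand side only on the second factor, by $\zeta\cdot(y,t)=(y,\zeta^{-1}t)$, and the map is equivariant. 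Taking quotients gives
\[Q\times_{\mu_{q(n)}}\bG_m\;\cong\;X(1)\times_{\bG_m}\bigl(\bG_m/\mu_{q(n)}\bigr)\;=\;X(1)\times_{\bG_m}\bG_m\;=\;X(1),\]
because the $q(n)$-power map exhibits $\bG_m$ as the quotient $\bG_m/\mu_{q(n)}$.

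The only substantive point to check is that the $q(n)$-power map is actually an \'etale $\mu_{q(n)}$-torsor, which requires $q(n)$ to be coprime to $p$. This is immediate from
\[q(n)=1+q+\cdots+q^{n-1}\equiv 1\pmod p\]
since $q=p^{s}$. Once this is in place, the descent argument above is formal, and I do not foresee a further obstacle; the main conceptual step is recognising that working with $Q$ instead of arbitrary level sets of $e_n$ converts $\pi$ into precisely the Kummer pullback torsor.
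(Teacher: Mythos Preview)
Your proof is correct and follows essentially the same approach as the paper: both use the multiplication map $(x,t)\mapsto tx$ together with the fact that $e_n$ is homogeneous of degree $q(n)$. Your packaging of this map as the pullback of the Kummer $\mu_{q(n)}$-torsor $[q(n)]\colon\bG_m\to\bG_m$ along $e_n$ (and your explicit check that $p\nmid q(n)$) is more careful than the paper's terse argument, which simply asserts surjectivity and notes that the fibers are $\mu_{q(n)}$-orbits, but the underlying idea is identical.
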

\begin{proof}
We consider the map 
\[ p:Q\times \bG_m\to X(1)\quad by\ (x,t)\mapsto tx.\]
We see
\[e_n(p(x,t))=e_n(tx_1,...,tx_n)=t^{1+q+...+q^{n-1}}e_n(x_1,...,x_n).\]
It is easily seen that this map is onto.
Moreover if $x\in Q$ and $t\in \mu_{q(n)}$, then $p(x,t)=tx\in Q$.
In fact $\mu_{q(n)}$ acts on $Q$.  Since $p(x,t)=p(tx,1)$, we have
the isomorphism in this lemma.
\end{proof} 
Remark 2.1 .It is immediate that the left $SG_{n}$-action and the right $\mu_{q(n)}$-action on Q is compatitive.i.e$(gx)\mu =g(x\mu) for g\in SG_
{n} and \mu \in \mu_{q(n)}.$
\begin{lemma}
We have $Q(\bF_q)=\emptyset$.
\end{lemma}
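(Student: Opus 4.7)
The plan is to unwind the definition of $Q(\bF_q)$ and then exploit the fact that the Frobenius $a \mapsto a^q$ acts trivially on $\bF_q$.

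First I would identify an $\bF_q$-point of $Q$ with a tuple $(a_1,\ldots,a_n)\in\bF_q^n$ satisfying $e_n(a_1,\ldots,a_n)=1$. The statement is implicitly for $n\ge 2$ (since for $n=1$ the lemma is false, as $e_1=x_1$ has the root $x_1=1$), and this is the case relevant to the surrounding induction and to the $n$-cycle $w=(1,\ldots,n)$ in the Deligne--Lusztig interpretation.

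Next I would invoke the Moore determinant formula from the previous paragraph,
\[
e_n(a_1,\ldots,a_n)=\det\!\bigl(a_i^{q^{j-1}}\bigr)_{1\le i,j\le n}.
\]
For any $a_i\in\bF_q$ one has $a_i^q=a_i$, so $a_i^{q^{j-1}}=a_i$ for every $j$. Hence the Moore matrix at a point of $\bF_q^n$ has all $n$ columns equal to the vector $(a_1,\ldots,a_n)^{T}$, so its rank is at most $1$, and for $n\ge 2$ its determinant vanishes. Consequently $e_n(a_1,\ldots,a_n)=0\ne 1$, which proves $Q(\bF_q)=\emptyset$.

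As a sanity check, the same conclusion follows from the product formulation: $e_n$ is, up to a $(q-1)$-th root, the product $\prod_{0\ne v}v$ over nonzero $\bF_q$-linear combinations $v=\lambda_1 x_1+\cdots+\lambda_n x_n$, and this vanishes at $(a_1,\ldots,a_n)$ precisely when the $a_i$ are $\bF_q$-linearly dependent. But $n\ge 2$ elements of $\bF_q$ (a $1$-dimensional $\bF_q$-vector space) are automatically dependent, giving the same obstruction. There is no real obstacle here — the content of the lemma is just the observation that an $\bF_q$-rational point trivializes the twist built into $e_n$. The only thing to flag in the write-up is the standing assumption $n\ge 2$, which is consistent with the role of $Q$ as the Deligne--Lusztig variety attached to the $n$-cycle.
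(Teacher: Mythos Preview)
Your proof is correct and follows essentially the same argument as the paper: both observe that for $(a_1,\ldots,a_n)\in\bF_q^n$ one has $a_i^{q^{j-1}}=a_i$, so the Moore matrix has all columns (equivalently, in the paper's transposed form, all rows) equal and hence $e_n=0\ne 1$. Your additional remarks about the product formulation and the standing assumption $n\ge 2$ are accurate but not needed for the paper's version.
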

\begin{proof}
Let $(x_1,...,x_n)$ be a $\bF_q$-rational ponits.  Then $x_i^q=x_i$.
Hence we see
\[ e_n=\left|\begin{array}{cccc}
x_1&x_2&\ldots&x_n\\
x_1^q&x_2^q&\ldots&x_n^q\\
\multicolumn{4}{c}{\dotfill}\\
x_1^{q^{n-1}}&x_2^{q^{n-1}}&\ldots&x_n^{q^{n-1}}
\end{array}\right| =\left|\begin{array}{cccc}
x_1&x_2&\ldots&x_n\\
x_1&x_2&\ldots&x_n\\
\multicolumn{4}{c}{\dotfill}\\
x_1&x_2&\ldots&x_n
\end{array}\right|=0.\]
\end{proof}
\begin{lemma}
The group $SG_n$ acts on $Q$ freely.
\end{lemma}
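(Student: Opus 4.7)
The plan is to reduce freeness to the following standard identification: a closed point of $Q$ has coordinates that are $\bF_q$-linearly independent elements of $k$, and the action of $SG_n \subset GL_n(\bF_q)$ on $\bA^n$ is $\bF_q$-linear, so no nontrivial element of $SG_n$ can stabilize such a tuple.

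Before the main argument I would first record that $SG_n$ actually preserves $Q$. Writing the action of $g = (g_{ij}) \in G_n$ as $x_i \mapsto \sum_k g_{ik} x_k$ and using $g_{ik}^{q^{j-1}} = g_{ik}$ (since $g_{ik} \in \bF_q$), a direct computation on the Moore-type matrix $M = (x_i^{q^{j-1}})$ defining $e_n$ gives $g \cdot e_n = \det(g)\, e_n$. Restricting to $SG_n$ then fixes $e_n$ and hence the hypersurface $\{e_n = 1\} = Q$.

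The key input is the classical Moore determinant criterion: for $a = (a_1,\dots,a_n) \in k^n$, one has $e_n(a) \ne 0$ iff $a_1,\dots,a_n$ are $\bF_q$-linearly independent in $k$. Only one direction is actually needed and it is essentially immediate: if $\sum c_i a_i = 0$ with $c_i \in \bF_q$ not all zero, then applying Frobenius gives $\sum c_i a_i^{q^{j-1}} = 0$ for every $j$, producing a nontrivial $\bF_q$-linear relation among the rows of the Moore matrix evaluated at $a$, so $e_n(a) = 0$. Since $e_n \equiv 1$ on $Q$, every $a \in Q(k)$ has $\bF_q$-linearly independent coordinates.

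The freeness statement is then the routine final step. If $g \in SG_n$ fixes $a \in Q(k)$, then $(g - I)a = 0$, which says that each row of $g - I \in M_n(\bF_q)$ provides an $\bF_q$-linear relation on $(a_1,\dots,a_n)$; by the previous paragraph these coordinates are $\bF_q$-linearly independent in $k$, so every row of $g - I$ vanishes and $g = I$. The only potentially delicate ingredient is the Moore determinant criterion, but the direction we need follows at once from $\bF_q$-linearity of Frobenius, so I expect no genuine obstacle.
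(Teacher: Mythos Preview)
Your argument is correct and rests on the same observation as the paper's: a nonidentity $g\in GL_n(\bF_q)$ can only fix vectors whose coordinates satisfy a nontrivial $\bF_q$-linear relation, and on such vectors $e_n$ vanishes. The paper expresses this via a change of basis (bringing the fixed vector to the form $(1,0,\dots,0)$ and noting $e_n(1,0,\dots,0)=0$), whereas you invoke the Moore determinant criterion directly and read the relations off the rows of $g-I$; the content is the same, with your version being a bit more explicit.
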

\begin{proof}
Assume that there is  $0\not =g\in G_n$ such that 
\[gx=x\qquad for \ x\in Q\subset \bA^n. \]
 Then we can identify that $x$ is an eigen vector
for the (linear) action $g$ with the eigen value $1$.
Hence we can take $x=(1,0...,0)$ after some change of basis.
Of course $e_n(1,0...,0)=0$ so $x\not \in Q$.  This  is a contradiction.
\end{proof}

The group $SG_n$ acts freely on the (smooth) variety $Q$.
Hence $Q/SG_n$ exists as a variety and we have
\[ Q/SG_n=Spec(A^{SG_n})\quad for\ A=k[x_1,...,x_n]/(e_n-1).\]

\begin{thm}  We have an isomorphism
\[A^{SG_n}\cong k[c_{n,1},...,c_{n,n-1}]\quad i.e.,\ \ Q/SG_n\cong \bA^{n-1}.\]
\end{thm}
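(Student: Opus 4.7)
The plan is to combine Dickson's theorem on $R^{SG_n}$ with the free $SG_n$-action on $Q$ (Lemma~2.3). Write $R=k[x_1,\ldots,x_n]$ and put $B=k[c_{n,1},\ldots,c_{n,n-1}]$, so that $A=R/(e_n-1)$ and the composition $R^{SG_n}\twoheadrightarrow R^{SG_n}/(e_n-1)\xrightarrow{\sim} B$ gives a natural ring inclusion $B\hookrightarrow A^{SG_n}$; the goal is to upgrade this to equality.

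Since $R^{SG_n}=k[e_n,c_{n,1},\ldots,c_{n,n-1}]$ is itself a polynomial ring (Dickson, as recalled above), the Cohen--Macaulay property of $R$ together with Auslander--Buchsbaum forces $R$ to be a free module of rank $N:=|SG_n|$ over $R^{SG_n}$. Dividing out by the nonzerodivisor $e_n-1$ gives that $A$ is free of rank $N$ over $B$. In particular $A^{SG_n}\subseteq A$ is a torsion-free finitely generated $B$-module, and hence integral over $B$.

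I next compute the invariants generically. Let $L=\mathrm{Frac}(B)$. The algebra $A\otimes_B L$ is Artinian of length $N$ over $L$, and its spectrum still carries a free $SG_n$-action. Freeness forces every orbit of geometric points to have exactly $N$ elements, while the sum of the lengths equals $N$; the only possibility is a single orbit of $N$ points, each of length one. Thus $\mathrm{Spec}(A\otimes_B L)\to\mathrm{Spec}(L)$ is an \'etale $SG_n$-torsor, so $(A\otimes_B L)^{SG_n}=L$. By flat base change we obtain $A^{SG_n}\otimes_B L=L$, and torsion-freeness over $B$ embeds $A^{SG_n}$ into $L$ as a $B$-subalgebra. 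Since $B$ is integrally closed in $L$ and $A^{SG_n}$ is integral over $B$, we deduce $A^{SG_n}\subseteq B$, and the reverse inclusion yields $A^{SG_n}=B=k[c_{n,1},\ldots,c_{n,n-1}]$.

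The main step requiring care is the orbit/length argument identifying the generic fiber as an \'etale torsor; this is the only place where the freeness of the $SG_n$-action is used beyond just guaranteeing that $Q/SG_n$ exists as a scheme. The remaining pieces---Dickson's freeness, flat base change of invariants, and normality of $B$---are standard commutative algebra.
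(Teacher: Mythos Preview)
Your argument is correct. The overall strategy coincides with the paper's---show that $A$ is free of rank $|SG_n|$ over $B=k[c_{n,1},\dots,c_{n,n-1}]$ and then use freeness of the $SG_n$-action to force $A^{SG_n}=B$---but the two implementations diverge at both steps. For the freeness, the paper passes to the projective closure $\bar Q$ and computes the ratio of Poincar\'e series $PS(\bar A)/PS(\bar B)$ to extract the rank $|SG_n|$, together with the observation that $c_{n,1},\dots,c_{n,n-1}$ form a regular sequence in $\bar A$; you instead invoke Dickson's description of $R^{SG_n}$ and miracle flatness to get $R$ free of rank $|SG_n|$ over $R^{SG_n}$, then base-change along the nonzerodivisor $e_n-1$. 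For the final identification, the paper appeals (rather tersely) to the \'etale quotient map $\pi\colon Q\to Q/SG_n$ having local rank $|SG_n|$; you make the endgame explicit by localizing at the generic point of $B$, recognizing the generic fiber as an \'etale $SG_n$-torsor, and then using integrality of $A^{SG_n}$ over $B$ together with normality of the polynomial ring $B$. Your version avoids the Poincar\'e-series bookkeeping and spells out exactly where the free action enters, at the cost of citing a bit more commutative algebra (miracle flatness, flat base change for invariants); the paper's version is more self-contained in that it computes the rank from scratch, but its last sentence leaves the passage from ``same rank'' to ``$A^{SG_n}=B$'' somewhat implicit.
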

\begin{proof}
It is almost immediate
\[ k[c_{n,1},...,c_{n,n-1}]\subset A^{SG_n}.\]

The coordinate ring $\bar A$ of the Zariski closure $\bar Q$ of $Q$ in $\bP^{n}$
 is given as
\[\bar A=k[x_0,...,x_n]/(e_n=x_0^{1+q+..+q^{n-1}}).\]
Of course, the coordinate ring $\bar B$ of the closure of 
$Spec(k[c_{n,1},...,c_{n,n-1}])$  in $\bar Q$ is given as
\[\bar B=k[x_0,c_{n,1},...,c_{n,n-1}].\]

Next we compute the Poincare polynomials of $\bar A$ and $\bar A^{SG_n}$ ;
\[PS(\bar A)=(1-t^{1+q+...+q^{n-1}})/(1-t)^{n+1}=
(1+t+...+t^{q+...+q^{n-1}})/(1-t)^n,\]
\[PS(\bar B)=1/(1-t)(1-t^{|c_{n,1}|})...(1-t^{|c_{n,n-1}|})\]
\[=(1+t+...+t^{|c_{n,1}|-1})^{-1}...(1+t+...+t^{|c_{n,n-1}|-1})^{-1}/(1-t)^n.\]
Hence we get
\[PS(\bar A)/PS(\bar B)=(1+t+...t^{|c_{n,1}|-1})...(1+t+...t^{|c_{n,n-1}|-1})\]
\[  \qquad \qquad \qquad \times
(1+t+...+t^{q+..+q^{n-1}}).\]
Thus we know
\[rank(PS(\bar A)/PS(\bar B))=|c_{n,1}|\times ...\times |c_{n,n-1}|
\times (1+q+...+q^{n-1})\]
\[=(q^n-q^1)...(q^n-q^{n-1})((q^n-1)/(q-1))=|SG_n|.\]

On the other hand $c_{n,1},...,c_{n,n-1}$ is regular sequence in $\bar A$.
Hence $\bar A$ is $\bar B$-free,  that is
\[ \bar A=\bar B\{x_1,...,x_m\}\]
where $m=|SG_n|$ from the results using the Poincare polynomials above.

Let $\pi:Q\to Q/SG_n$ be the projection.
Since $\pi$ is etale, for all $x\in Q$, the local ring $O_x$ is
$O_{\pi(x)}$-free, and $rank_{O_{x'}}(O_x)=|SG_n|$.
Thus we get the desired result $A^{SG_n}=k[c_{n,1},...,c_{n,n-1}]$.
\end{proof}
Similarly, we can prove
\begin{thm}  Let $A'=k[x_1,...,x_n]/(e_n^{q-1}-1)$
and $Q'=Spec(A')$.  Then we have an isomorphism
\[(A')^{G_n}\cong k[c_{n,1},...,c_{n,n-1}]\quad i.e.,\ \ Q'/G_n\cong \bA^{n-1}.\]
\end{thm}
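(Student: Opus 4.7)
The plan is to follow the proof of the preceding theorem almost verbatim, replacing the equation $e_n=1$ with $e_n^{q-1}=1$ and $SG_n$ with $G_n$. First I would verify that $G_n$ acts freely on $Q'$ by repeating the argument of the preceding lemma: any fixed point of a nontrivial $g\in G_n$ is a $1$-eigenvector, so after a change of basis equals $(1,0,\ldots,0)$, and then $e_n(x)=0$ violates $e_n(x)^{q-1}=1$. Since every $c_{n,i}$ is $G_n$-invariant, we already have the inclusion $k[c_{n,1},\ldots,c_{n,n-1}]\subset (A')^{G_n}$.

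Next I would homogenize. Because $e_n$ is homogeneous of degree $q(n)=(q^n-1)/(q-1)$, the projective closure $\bar Q'\subset \bP^n$ has coordinate ring
\[
\bar A' = k[x_0,\ldots,x_n]/(e_n^{q-1}-x_0^{q^n-1}),
\]
and the closure in $\bar Q'$ of $\mathrm{Spec}\,k[c_{n,1},\ldots,c_{n,n-1}]$ has coordinate ring $\bar B'=k[x_0,c_{n,1},\ldots,c_{n,n-1}]$ (note that the invariant $c_{n,0}=e_n^{q-1}$ is absorbed into $x_0^{q^n-1}$). A direct computation gives the Poincar\'e series
\[
PS(\bar A') = \frac{1-t^{q^n-1}}{(1-t)^{n+1}},\qquad PS(\bar B') = \frac{1}{(1-t)\prod_{i=1}^{n-1}(1-t^{q^n-q^i})},
\]
whose ratio is a polynomial in $t$ with value at $t=1$ equal to
\[
(q^n-1)(q^n-q)\cdots(q^n-q^{n-1}) = |G_n|.
\]
The new factor $(1-t^{q^n-1})/(1-t)$ replaces the earlier $1+t+\cdots+t^{q+\cdots+q^{n-1}}$ and supplies precisely the extra $(q^n-1)$ needed to pass from $|SG_n|$ to $|G_n|$.

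From here the remaining argument copies the second half of the proof of the preceding theorem. The sequence $c_{n,1},\ldots,c_{n,n-1}$ is regular in $\bar A'$, so $\bar A'$ is free of rank $|G_n|$ over $\bar B'$. Since $\pi\colon Q'\to Q'/G_n$ is \'etale of degree $|G_n|$ (the constant group scheme $G_n$ is \'etale over $k$ and acts freely), for each $x\in Q'$ the local ring $O_x$ is free over $O_{\pi(x)}$ of rank $|G_n|$; comparing this with the global rank over $k[c_{n,1},\ldots,c_{n,n-1}]$ forces the desired equality $(A')^{G_n}=k[c_{n,1},\ldots,c_{n,n-1}]$. The step requiring the most care is the regular-sequence claim for $c_{n,1},\ldots,c_{n,n-1}$ in $\bar A'$: it is the same dimension-count issue tacitly used in the $SG_n$ case, and it is what bridges the Poincar\'e series computation with the conclusion that $\bar A'$ is genuinely $\bar B'$-free rather than merely having the correct Hilbert function.
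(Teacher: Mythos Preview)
Your proposal is correct and follows exactly the approach the paper indicates: the paper's own proof of Theorem~2.5 is simply the sentence ``Similarly, we can prove,'' referring back to the Poincar\'e-series/regular-sequence argument of Theorem~2.4, and you have filled in precisely those modifications (replacing $e_n=1$ by $e_n^{q-1}=1$, homogenizing to degree $q^n-1$, and noting that the rank becomes $|G_n|$ instead of $|SG_n|$). The paper also offers an entirely different proof in \S7 via an explicit Lang-map computation identifying $Q'$ with the Deligne--Lusztig variety $\tilde X(\dot w_n)$ and reading off the Dickson invariants as coordinates on $U^*/\rho(InU^*)$, but that is an alternative route, not the one alluded to at this point in the text.
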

In $\S 7$ below, we give a complete different proof of the above theorem.

\section{equivariant cohomology}

For a smooth algebraic variety $X$ over $k=\bar \bF_p$, 
we consider the $mod$ $\ell$ etale cohomology
for $\ell\not =p$.
 Let us write simply
\[ H^*(X)=H_{et}^*(X;\bZ/\ell).\] 
Let $\rho: G\to W=\bA^n$ a faithful representation.
Let $V_n=W-S$ be an open set of $W$ such that $G$ act freely $V_n$
where $codim_WS> n\ge 2$.   Then the 
classifying space $BG$ of $G$ is defined as $colim_{n\to \infty} (V_n/G)$.
Let $X$ be a smooth $G$-variety.  Then we can define the equivariant
cohomology ([Vi], [Mo-Vi])
\[ H_G^*(X)=lim_nH_{et}^*(V_n\times _GX;\bZ/\ell).\]
Of course $H_G^*(pt.)=H^*(BG)=H^*_{et}(BG;\bZ/\ell)$.

One of the most useful facts in equivariant cohomology theories is the following localized exact sequence. Let $i:Y\subset X$ be a regular closed inclusion of $G$-varieties,
of $codim_X(Y)=c$ and $j:U=X-Y\subset X$.  Then there is a long exact sequence
\[ \to H_G^{*-2c}(Y)\stackrel{i_*}{\to}
         H_G^*(X)\stackrel{j^*}{\to} H_G^*(U)\stackrel{\delta}{\to}H_G^{*-2c+1}(Y)\to...\]
         
Now we apply the above exact sequence for concrete cases.
We consider the case $G=G_n=GL_n(\bF_q)$.  Recall
\[F=Spec(k[x_1,..., x_n]/(e_n^{q-1}))=\cup _{\lambda=(\lambda_1,...,\lambda_n)\not =0}(F_{\lambda})\]
where $F_{\lambda}=\{(x_1,...,x_n)|\lambda_1x_1+...+\lambda_nx_n=0\}\subset \bA^n.$  

Let $F(1)=F$ and $F(2)$ be the ($codim=1$) set of singular points in $F(1)$,
namely, $F(2)=\cup F_{\lambda,\mu}$ with
\[F_{\lambda,\mu}=\begin{cases} F_{\lambda}\cap F_{\mu}\quad if \ F_{\lambda}\not =F_{\mu}
\\ \emptyset\quad if \  F_{\lambda}= F_{\mu}.
\end{cases}\]   
Similarly, we define $F(i)$ as the variety defined by the set of $codim_{\bA^n}F(i)=i$. 
Let us write $X(i)=X-F(i).$
Thus we have a sequence of the algebraic sets
\[ F(1)\supset F(2)\supset ...\supset F(n)=\{0\}\supset F(n+1)=\emptyset,\]
\[ X-F(1)=X(1)\subset X(2)\subset...\subset X(n)=\bA^n-\{0\}\subset X(n+1)=\bA^n.\]
Therefore we have the long exact sequences
\[\to H_{G_n}^{*-2}(F(1)-F(2))\stackrel{i_*}{\to} H_{G_n}^*(X(2))\stackrel{j^*}{\to} 
H^*_{G_n}(X(1))
\stackrel{\delta}{\to}...,\]
\[ ...........................................\]
\[\to H_{G_n}^{*-2i}(F(i)-F(i+1))\stackrel{i_*}{\to} 
H_{G_n}^*(X(i+1))\stackrel{j^*}{\to} H^*_{G_n}(X(i))
\stackrel{\delta}{\to}...,\]
\[ ...........................................\]
\[\to H_{G_n}^{*-2n}(F(n)-F(n+1))\stackrel{i_*}{\to} 
H_{G_n}^*(X(n+1))\stackrel{j^*}{\to} 
H^*_{G_n}(X(n))
\stackrel{\delta}{\to}...\]

\begin{lemma} 
We have $H^*_{G_n}(X(1))\cong\Lambda(f)$ with $|f|=1$.
\end{lemma}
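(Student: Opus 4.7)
The strategy is to reduce $H^*_{G_n}(X(1))$ to the etale cohomology of $\bG_m$ by exploiting the variety $Q'$ of Theorem 2.5, using that $G_n$ acts freely on $Q'$.

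First I would establish the $G_n$-analogue of Lemma 2.1. Since $F=\{e_n=0\}=\{c_{n,0}=0\}$, one has $X(1)=\{x\in\bA^n\mid c_{n,0}(x)\neq 0\}$. The morphism $p\colon Q'\times \bG_m\to X(1)$, $(x,t)\mapsto tx$, satisfies $c_{n,0}(tx)=t^{q^n-1}c_{n,0}(x)=t^{q^n-1}$, is surjective (as $k=\bar\bF_p$ is algebraically closed), and its fibers are precisely the orbits of the diagonal $\mu_{q^n-1}$-action $(x,t)\mapsto(\zeta x,\zeta t)$. Hence $X(1)\cong Q'\times_{\mu_{q^n-1}}\bG_m$. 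Since the linear $G_n$-action on $Q'$ commutes with the scalar $\mu_{q^n-1}$-action, this gives $H^*_{G_n}(X(1))\cong H^*_{G_n\times\mu_{q^n-1}}(Q'\times \bG_m)$.

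Next, the eigenvector argument of Lemma 2.3 carries over verbatim to show that $G_n$ acts freely on $Q'$: any fixed vector of $g\in G_n$ may be normalized to $(1,0,\dots,0)$, forcing $c_{n,0}(x)=0\neq 1$. Combining with Theorem 2.5 (which gives $Q'/G_n\cong \bA^{n-1}$), I can quotient out the free $G_n$-action to obtain $H^*_{G_n\times\mu_{q^n-1}}(Q'\times \bG_m)\cong H^*_{\mu_{q^n-1}}(\bA^{n-1}\times \bG_m)$, where the residual $\mu_{q^n-1}$-action on $\bA^{n-1}=\mathrm{Spec}\,k[c_{n,1},\dots,c_{n,n-1}]$ acts linearly on $c_{n,i}$ with weight $q^n-q^i$, and on $\bG_m$ by the standard scalar action. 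Dilation by $\bA^1$ then furnishes a $\mu_{q^n-1}$-equivariant $\bA^1$-homotopy retracting $\bA^{n-1}\times \bG_m$ onto $\{0\}\times \bG_m$, so by $\bA^1$-invariance of etale cohomology applied inside the Borel construction one gets $H^*_{\mu_{q^n-1}}(\bA^{n-1}\times \bG_m)\cong H^*_{\mu_{q^n-1}}(\bG_m)$.

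Finally, $\mu_{q^n-1}\subset \bG_m$ acts freely on $\bG_m$ by scalar multiplication, with quotient $\bG_m$ via the $(q^n-1)$-th power map. Hence $H^*_{\mu_{q^n-1}}(\bG_m)\cong H^*_{et}(\bG_m;\bZ/\ell)\cong \Lambda(f)$ with $|f|=1$, the last isomorphism coming from the Kummer sequence together with the fact that $\bG_{m,\bar\bF_p}$ has $\ell$-cohomological dimension $1$. The main technical point to check carefully is the $\bA^1$-invariance step for equivariant etale cohomology with a nontrivially weighted $\mu_{q^n-1}$-action; once this and the (routine) descent of the scalar action under $Q'/G_n\cong\bA^{n-1}$ are in hand, the remainder is a clean chain of isomorphisms.
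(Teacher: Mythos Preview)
Your proof is correct and follows essentially the same route as the paper: the $G_n$-version of Lemma 2.1, freeness of the $G_n$-action on $Q'$, Theorem 2.5, and then collapsing the $\bA^{n-1}$ factor to reduce to $H^*(\bG_m)\cong\Lambda(f)$. The paper sidesteps your flagged technical point about equivariant $\bA^1$-invariance by first passing to the honest quotient $X(1)/G_n\cong\bA^{n-1}\times_{\mu_{q^n-1}}\bG_m$ (the $G_n$-action being free) and observing that this is a vector bundle over $\bG_m/\mu_{q^n-1}\cong\bG_m$, so ordinary homotopy invariance of etale cohomology suffices.
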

\begin{proof}
>From the $G_n$ version (but not $SG_n$) of Lemma 2.1,  we have
\[X(1)\cong Q'\times_{\mu_{q^n-1}}\bG_m.\]
Hence we can compute the equivariant cohomology from Theorem 2.5,Lemma 2.3 and Remark 2.1
\[H^*_{G_n}(X(1))\cong H^*(X(1)/G_{n})\]
\[\cong H^*(Q'/G_n\times_{\mu_{q^n-1}}\bG_m)\cong
  H^*(\bA^{n-1}\times_{\mu_{q^n-1}}\bG_m)\]
  \[\cong H_{\mu_{q^n-1}}(\bG_m)\cong \Lambda(f)\quad |f|=1.\]
  \end{proof}
  \begin{lemma}
  We have an isomorphism
  \[H_{G_n}^*(F(i)-F(i+1))\cong H^*(BG_{i})\otimes \Lambda(f)\]
  \end{lemma}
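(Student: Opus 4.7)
The plan is to identify $F(i) - F(i+1)$ as an induced $G_n$-variety $G_n \times_P Y$, where $P$ is a maximal parabolic of $G_n$ and $Y \cong X(1)_{n-i}$ is the analog of $X(1)$ for $G_{n-i}$, and then to reduce the resulting $P$-equivariant cohomology via Levi decomposition and Lemma 3.1.

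I would first stratify $F(i) - F(i+1)$ by $\bF_q$-linear relations: for $x \in \bA^n(k)$, set $V_x := \{\lambda \in \bF_q^n : \lambda_1 x_1 + \cdots + \lambda_n x_n = 0\}$, so that $F(i) - F(i+1) = \{x : \dim_{\bF_q} V_x = i\} = \coprod_V S_V^\circ$, the disjoint union over $i$-dimensional subspaces $V \subseteq \bF_q^n$ of the strata $S_V^\circ = \{x : V_x = V\}$. The $G_n$-action permutes the $V$'s transitively through the contragredient representation. For $V_0 = \langle e_{n-i+1}, \ldots, e_n \rangle$ the stabilizer $P$ is the standard maximal parabolic of block-upper-triangular matrices with block sizes $(n-i, i)$, and $S_{V_0}^\circ$ is identified, via the first $n-i$ coordinates, with $X(1)_{n-i}$. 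Hence $F(i) - F(i+1) \cong G_n \times_P X(1)_{n-i}$ and $H_{G_n}^*(F(i) - F(i+1)) \cong H_P^*(X(1)_{n-i})$.

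Next I would analyze the $P$-action. A direct computation shows that $\left(\begin{smallmatrix} A & B \\ 0 & C \end{smallmatrix}\right)(x', 0)^T = (Ax', 0)^T$, so the action on $X(1)_{n-i}$ factors through the quotient $P \twoheadrightarrow G_{n-i}$ given by the upper-left block. Writing $P = L \ltimes U$ with Levi $L = G_{n-i} \times G_i$ and unipotent radical $U$, the kernel $K := G_i \ltimes U$ of this quotient acts trivially on $X(1)_{n-i}$. The short exact sequence $1 \to K \to P \to G_{n-i} \to 1$ yields a fibration $BK \to EP \times_P X(1)_{n-i} \to EG_{n-i} \times_{G_{n-i}} X(1)_{n-i}$ with Serre spectral sequence
\[E_2^{p,q} = H^p\bigl(EG_{n-i} \times_{G_{n-i}} X(1)_{n-i};\, H^q(BK;\bZ/\ell)\bigr) \Rightarrow H_P^{p+q}(X(1)_{n-i}).\]

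It remains to identify $E_2$ and show collapse. Since $U$ is a $p$-group and $\ell \neq p$, the Lyndon-Hochschild-Serre sequence for $1 \to U \to K \to G_i \to 1$ degenerates, giving $H^*(BK;\bZ/\ell) \cong H^*(BG_i;\bZ/\ell)$. Because the Levi is a \emph{direct} product $G_{n-i} \times G_i$ the factors commute, so $G_{n-i}$ acts trivially on $H^*(BG_i;\bZ/\ell)$. By Lemma 3.1 applied to $n-i$, the base cohomology $H^*_{G_{n-i}}(X(1)_{n-i}) \cong \Lambda(f)$ is concentrated in degrees $0$ and $1$; thus $E_2 \cong \Lambda(f) \otimes H^*(BG_i)$, and all differentials $d_r$ with $r \geq 2$ vanish for degree reasons (their targets lie in base degree $\geq 2$, where $E_2$ vanishes). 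This yields $H_P^*(X(1)_{n-i}) \cong H^*(BG_i) \otimes \Lambda(f)$. The principal technical point is the triviality of the $G_{n-i}$-action on $H^*(BK;\bZ/\ell)$, which combines the direct-product structure of the Levi with the mod-$\ell$ vanishing of unipotent $p$-group cohomology.
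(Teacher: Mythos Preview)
Your proof is correct and follows essentially the same strategy as the paper: both identify $F(i)-F(i+1)$ as a $G_n$-variety induced from the parabolic $P$ with Levi $G_i\times G_{n-i}$, reduce $H_{G_n}^*$ to $H_P^*$, kill the unipotent radical using $\ell\neq p$, and then separate the $G_i$ and $G_{n-i}$ contributions before invoking Lemma~3.1. The only cosmetic difference is that the paper obtains the tensor splitting $H^*(BG_i)\otimes H_{G_{n-i}}^*(X(1)')$ directly via a K\"unneth argument on approximation spaces $V_N\cong V_N'\times V_N''$, whereas you reach it through a Serre spectral sequence that collapses because the base $H_{G_{n-i}}^*(X(1)_{n-i})\cong\Lambda(f)$ is concentrated in degrees $\le 1$.
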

  \begin{proof}
  Each irreducible component of $F(i)$ is a $codim=i$ subspace, which is also identified
an element of the Grassmannian.  Hence we can write
\[  F(i)-F(i+1)\cong \amalg_{\bar g\in G_n/(P_{i,n-i})} g(\bA^{n-i}-F(1)')\]
where $g\in G_{n}$ is a representative element of $\bar g,$ $F(1)'=Spec(k[x_1,...,x_{n-i}]/(e_{n-i}^{q-1})$ and
$P_{i,n-i}$ is the parabolic subgroup
\[ P_{i,n-i}=(G_i\times G_{n-i})\ltimes U_{i,n-i}(\bF_q)
\cong \{\left(\begin{array}{cc} G_i& *\\
                                       0 & G_{n-i}\end{array}\right)
|*\in U_{i,n-i}(\bF_q)\}.\]
Since the stabilizer group of $X(1)'= \bA^{n-i}-F(1)'$is $P_{i,n-i}$, we note from [Vi] that $H_{G_{n}}^{*}(F(i)-F(i+1))\cong  H_{P_{i,n-1}}^*(X(1)')\cong H_{G_{i}\times G_{n-i}}^*(X(1)').$

Hence we can compute ( for $* < N$)
\[H^*_{G_n}(F(i)-F(i+1))\cong H^*(V_{N}'\times V_{N}''\times _{G_{i}\times G_{n-i}}X(1)'). \]
\[ \cong H^*((V_N'/G_i)\times V_N''\times_{G_{n-i}}X(1)'). \]
\[ \cong H_{G_i}^*\otimes H_{G_{n-i}}^*(X(1)').\]
Here $X(1)'$ is the ($n-i$)-dimensional version of $X(1)$,  and we identify
$V_N\cong V_N'\times V_N''$ where $G_i$ acts freely on $V_N'$ and so on.
>From the preceding lemma, we know $H_{G_{n-i}}^*(X(1)')\cong \Lambda(f)$.
\end{proof}

Let $r$ be the smallest number such that $q^r-1=0\ mod(\ell)$.
Recall that  
\[|G_n|=(q^n-1)(q^n-q)...(q^n-q^{n-1}).\]
Hence if $n<r$, then $H^*(BG_n)\cong\bZ/\ell$, and hence
$H_{G_n}^*(F(i)-F(i+1))\cong \Lambda(f)$ for $i\le n$.

The cohomology of $BGL_n$ is the same as that of $BGL_n(\bC)$, i.e.,
\[ H^*(BGL_n)\cong \bZ/\ell[c_1,...,c_n].\]
The Frobenius map $F$ acts on this cohomology by $c_i\mapsto q^{i}c_i$.
Recall that the Lang map induces a principal $G_{n}$-bundle
\[G_n\to GL_n\stackrel{L}{\to}GL_n\]
where $L(g)=g^{-1}F(g)$.  Thus we have a map
\[H^*(BGL_n)/((q^i-1)c_i)\cong \bZ/\ell[c_r,...,c_{[n/r]r}]
\to H^*(BG_n).\]

\begin{lemma} If $r=1$,  then
we have an isomorphism
 \[H^*(BG_n)
\cong 
 \bZ/\ell[c_1,...,c_{n}]
\otimes
\Delta(e_1,...,e_{n}).\]
 \end{lemma}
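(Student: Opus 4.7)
The plan is to induct on $n$, using the tower of localized long exact sequences arising from the stratification $F(1)\supset F(2)\supset\cdots\supset F(n)=\{0\}$ of $\bA^n$. The base case $n=1$ is direct: $G_1=\bF_q^\times$ is cyclic of order $q-1$, which is divisible by $\ell$ since $r=1$, so the mod $\ell$ cohomology of this cyclic group is $\bZ/\ell[c_1]\otimes\Delta(e_1)$ with $|c_1|=2$ and $|e_1|=1$.

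For the inductive step, assume the formula for all $G_m$ with $m<n$. I would compute $H^*(BG_n)=H^*_{G_n}(\bA^n)$ by working up the tower
\[H^*_{G_n}(X(1))\leftarrow H^*_{G_n}(X(2))\leftarrow\cdots\leftarrow H^*_{G_n}(X(n))\leftarrow H^*_{G_n}(\bA^n).\]
By Lemma 3.1, $H^*_{G_n}(X(1))\cong\Lambda(f)$, and by Lemma 3.2 combined with the inductive hypothesis,
\[H^*_{G_n}(F(i)-F(i+1))\cong \bZ/\ell[c_1,\ldots,c_i]\otimes\Delta(e_1,\ldots,e_i,f)\qquad(i<n).\]
The generators should lift as follows: the polynomial generators $c_j\in H^*(BG_n)$ arise as restrictions of the usual Chern classes via $H^*(BGL_n)\to H^*(BG_n)$ (through the map displayed just before the lemma) and restrict compatibly through the parabolic $P_{i,n-i}$ to the $c_j$'s on each stratum; the odd-dimensional $e_j$'s are their Bockstein preimages; and $f$ extends from $X(1)$ as the first Chern class of the tautological $\mu_{q^n-1}$-bundle arising in Lemma 3.1. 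The final long exact sequence for $i=n$, where $F(n)-F(n+1)=\{0\}$ carries the full $G_n$-action, takes the form
\[\to H^{*-2n}(BG_n)\xrightarrow{\cdot c_n}H^*(BG_n)\to H^*_{G_n}(X(n))\to H^{*-2n+1}(BG_n)\to\]
with pushforward given by multiplication by the top Chern class $c_n$ of the standard representation.

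The main obstacle is verifying that every connecting homomorphism $\delta$ vanishes and that the resulting short exact sequences split multiplicatively. I would combine three ingredients: (i) explicit multiplicative sections built from the globally defined Chern classes $c_j\in H^*(BG_n)$, which show the restrictions $H^*_{G_n}(X(i+1))\to H^*_{G_n}(X(i))$ hit the required generators; (ii) a Poincar\'e-series count comparing the rank of the resulting graded ring with $\prod_{j=1}^n(1+t^{2j-1})/(1-t^{2j})$, the Poincar\'e series of the expected answer; and (iii) the injectivity of multiplication by $c_n$ in the final sequence, which allows one to extract $H^*(BG_n)$ from $H^*_{G_n}(X(n))$. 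Assembling these yields $H^*(BG_n)\cong\bZ/\ell[c_1,\ldots,c_n]\otimes\Delta(e_1,\ldots,e_n)$.
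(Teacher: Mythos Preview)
Your overall framework---induction on $n$, the tower of localization sequences for the stratification $X(1)\subset\cdots\subset X(n+1)=\bA^n$, and the identification of each stratum via Lemmas 3.1 and 3.2---matches the paper's proof exactly. The paper also runs a second induction on $i$, showing $H^*_{G_n}(X(i))\cong H^*_{G_{i-1}}\otimes\Lambda(e_i)$ step by step.

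The gap is at the point you yourself flag as the main obstacle: showing $\delta(e_i)=0$. Your ingredient (i) does not address this. The globally defined Chern classes $c_j\in H^*(BG_n)$ do restrict compatibly and hence lift through every $j^*$, but they are even-dimensional; the class you must lift is the \emph{odd} class $e_i\in H^{2i-1}_{G_n}(X(i))$, and calling the $e_j$'s ``Bockstein preimages'' of the $c_j$'s is circular---at this stage you do not know such preimages exist in $H^*(BG_n)$, nor does lifting $c_j$ force a Bockstein preimage to lift. Ingredient (ii), a Poincar\'e-series comparison, cannot by itself decide whether $\delta(e_i)$ is $0$ or $1$ without an independent bound. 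Ingredient (iii) presupposes $c_n\not=0$ in $H^*(BG_n)$, which is again part of the conclusion.

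The paper resolves this by observing that, by degree, $\delta(e_i)\in H^0_{G_n}(F(i)-F(i+1))\cong\bZ/\ell$, so exactly one of $\delta(e_i)=0$ or $i_*(1)=0$ holds. It then shows $i_*(1)=c_i\not=0$ by comparing Krull dimensions: since $r=1$, one has $\mathrm{rank}_\ell G_i>\mathrm{rank}_\ell G_{i-1}$, so $H^*_{G_n}(X(i+1))$ must have strictly larger Krull dimension than $H^*_{G_n}(X(i))$, forcing $c_i\not=0$. (The remark after the proof and Proposition~4.2 give the cleaner alternative: restrict to the maximal torus $T(w)^F\cong(\bF_{q}^*)^{\times n}$ to see directly that $c_1,\ldots,c_n$ inject.) Either of these is the missing input your argument needs.
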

 \begin{proof}
 We prove by induction on $n$.  Assume that 
 \[H^*(BG_i)\cong 
 \bZ/\ell[c_1,...,c_{i}]\otimes\Delta(e_1,...,e_{i})\quad for \ i<n.\]
 
 We consider the long exact sequence
 \[\to H_{G_n}^{*-2i}(F(i)-F(i+1))\stackrel{i_*}{\to} H_{G_n}^*(X(i+1))\stackrel{j^*}{\to} H^*_{G_n}(X(i))
\stackrel{\delta}{\to}...\]
Here we use induction on $i$, and assume that
\[ H_{G_n}^*(X(i))\cong 
H_{G_{i-1}}^*\otimes \Lambda(e_i)\]
\[\cong \bZ/{\ell}[c_1,...,c_{i-1}]\otimes 
\Delta(e_1,...,e_{i-1})\otimes \Lambda(e_i).\]
(Letting $e_1=f$, we have the case $i=1$ from Lemma 3.1.)
>From the preceding lemma,  we still see
\[H_{G_n}^*(F(i)-F(i+1))\cong 
H_{G_{i}}^*\otimes \Lambda(f)\]
  \[\cong \bZ/{\ell}[c_1,...,c_i]\otimes \Delta(e_1,...,e_i)\otimes \Lambda(f).\]

In the above long exact sequence,the map $j^*$ is an epimorphism for $*<2i-1$,
because $H^{minus}(F(i)-F(i+1))=0$.  But $H_{G_n}^*(X(i))$ is multiplicatively generated
by the elements of $dim\le 2i-2$ and $e_{i}$.  By dimensional reason, we see
\[ \delta(e_{i})=1\quad or\quad\delta(e_{i})=0.\]
Of course if $\delta(e_{i})=0$, then $\delta=0$ for all $*\ge 0$. 

Consider the restriction map
$H_{G_n}^*(X(i+1))\to H_{G_i}^*(\bA^i)$
which is induced from $X(i+1)=\bA^n-F(i+1)\supset \bA^i$.
Since $|G_i|=(q^{ir}-1)q|G_{i-1}|$, the $\ell$-Sylow subgroup of $G_i$ is different from that
of $G_{i-1}$, (More precisely, $rank_{\ell}G_i>rank_{\ell}G_{i-1}$.)
So from the Quillen theorem, the Krull dimension of $H_{G_n}(X(i+1))$
is larger than that of $H_{G_n}^*(X(i))$. This fact implies
$i_*(1)=c_i$.  ( Let $p:V\to X$ be a $j$-dimensional bundle and $i:X\to V$ a section.  Then  the Chern class $c_j$ is defined as $i^*i_*(1)$.)
Thus we see $\delta(e_{i})=0$.  

Therefore we have the short exact  sequence
\[0\to 
H^*_{G_i}
\otimes\Lambda(f)
\stackrel{i_*}{\to}
 H^*_{G_n}(X(i+1))
\stackrel{j^*}{\to}
H^*_{G_{i-1}}\otimes\Lambda(e_{i})\to 0,\]
namely, we have an isomorphism
\[grH_{G_n}^*(X(i+1))\cong  
 \bZ/{\ell}[c_1,...,c_{i-1}]\otimes \Delta(e_1,...,e_{i})\]
\[  \otimes(\bZ/\ell[c_i]\{i_*(1)=c_i,i_*(f)\}\oplus \bZ/\ell\{1\}).\]
Let us write  $i_*(f)=e_{i+1}$.  Then $H^*_{G_n}(X(i+1))$ is the desired form
\[ H_{G_n}^*(X(i+1))\cong
\bZ/{\ell}[c_1,...,c_{i-1}]\otimes \Delta(e_1,...,e_{i})\]
\[  \otimes(\bZ/\ell[c_i]\{c_i,e_{i+1}\}\oplus \bZ/\ell\{1\})\]
\[ \cong
\bZ/{\ell}[c_1,...,c_{i}]
\otimes 
\Delta(e_1,...,e_{i})
\otimes \Lambda(e_{i+1}).\]
Thus we can see the desired result $H_{G_n}^*(X(n+1))\cong H^*(BG_n)$.
\end{proof}
{\bf  Remark.} In the above proof, to see $i_*(1)=c_i$ we used the Krull dimesion
(by Quillen).  However there is more natural argument (see Proposion 4.2 in the next section)
where the properties of the maximal torus $T(\dot w)$ are used.
\begin{thm} 
We have the isomorphism
 \[H^*(BG_n)
\cong 
 \bZ/\ell[c_{r},...,c_{[n/r]r}]
\otimes
\Delta(e_r,...,e_{[n/r]r}).\]
 \end{thm}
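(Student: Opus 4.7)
The plan is to mirror the proof of Lemma 3.3, with an outer induction on $n$ together with an inner induction on the stratum depth $i$. The base case of the outer induction is $n<r$: each factor in $|G_n|=\prod_{i=0}^{n-1}(q^n-q^i)$ has the form $q^{i}(q^{n-i}-1)$ with $0<n-i\le n<r$, so $\ell\nmid q^{n-i}-1$; hence $\ell\nmid |G_n|$ and $H^*(BG_n;\bZ/\ell)\cong\bZ/\ell$, matching the empty tensor product on the right-hand side.

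For the inductive step I would assume the theorem for $G_i$ with $i<n$ and run the stratification $X(1)\subset X(2)\subset\cdots\subset X(n+1)=\bA^n$ used in Lemma 3.3. The inputs are $H_{G_n}^*(X(1))\cong\Lambda(f)$ (Lemma 3.1) and, for $i<n$,
\[
H_{G_n}^*(F(i)-F(i+1))\cong H^*(BG_i)\otimes\Lambda(f)\cong\bZ/\ell[c_r,\ldots,c_{[i/r]r}]\otimes\Delta(e_r,\ldots,e_{[i/r]r})\otimes\Lambda(f),
\]
by Lemma 3.2 and the outer inductive hypothesis. Then I climb the long exact sequences inductively on $i$, tracking how $H_{G_n}^*(X(i+1))$ is assembled from $H_{G_n}^*(X(i))$ and the Gysin contribution, exactly as in the proof of Lemma 3.3.

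The key multiplicative input is the Gysin identification $i_*(1)=c_i$ coming from the top Chern class of the normal bundle of the $i$-th stratum. When $r\mid i$, the Chern class $c_i$ is nontrivial in $H^*(BG_n)$ since it survives the Lang quotient $H^*(BGL_n)/((q^j-1)c_j)$ (as $q^i-1\equiv 0\pmod\ell$); the Krull-dimension comparison from Lemma 3.3 then forces $\delta=0$, and one picks up the new generators $c_i$ and $e_i=i_*(f)$. When $r\nmid i$, the class $c_i$ vanishes in $H^*(BG_n)$ and the Krull dimension of $H_{G_n}^*(X(i+1))$ does not grow over that of $H_{G_n}^*(X(i))$; the connecting map $\delta$ is then forced to be nonzero in precisely the degrees needed to cancel the $H^*(BG_i)\otimes\Lambda(f)$ contribution, so $H_{G_n}^*(X(i+1))$ has the same multiplicative form as $H_{G_n}^*(X(i))$, consistent with $H^*(BG_i)=H^*(BG_{i-1})$ in that case.

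The main obstacle is carefully controlling $\delta$ when $r\nmid i$. The most conceptual way to handle both cases uniformly is through the maximal-torus description of $i_*(1)$ given in Proposition 4.2 (mentioned in the remark after Lemma 3.3), which pinpoints the Gysin class without ambiguity. A final Poincar\'e-series tally---summing the graded dimensions across all strata $F(i)-F(i+1)$ and checking agreement with the dimensions of $\bZ/\ell[c_r,\ldots,c_{[n/r]r}]\otimes\Delta(e_r,\ldots,e_{[n/r]r})$---then confirms that nothing has been miscounted and closes the induction.
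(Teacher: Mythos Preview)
Your approach coincides with the paper's: the same double induction on $n$ and on the stratum index $i$, fed by Lemmas 3.1--3.2, with the dichotomy $\delta(e_i)\in\{0,1\}$ resolved according to whether $r\mid i$, and Proposition~4.2 invoked as the cleaner alternative to the Krull-dimension argument. Two small corrections: the Gysin image $i_*(f)$ lands in degree $2i+1$, so it is $e_{i+1}$ rather than $e_i$; and in the case $r\nmid i$ the paper argues slightly more directly than you do---since $i_*(1)=c_i=0$ in $H^*_{G_i}$, exactness forces $1\in\operatorname{Im}\delta$, hence $\delta(e_i)=1$---rather than appealing to Krull dimension not growing.
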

 \begin{proof}
 We prove the theorem by induction on $n$.  Assume that
 \[H^*(BG_i)\cong 
 \bZ/\ell[c_r,...,c_{[n/i]r}]\otimes\Delta(e_r,...,e_{[n/r]r})\quad for \ i<n.\]
 We also consider the long exact sequence
 \[\to H_{G_n}^{*-2i}(F(i)-F(i+1))\stackrel{i_*}{\to} H_{G_n}^*(X(i+1))\stackrel{j^*}{\to} H^*_{G_n}(X(i))
\stackrel{\delta}{\to}...\]
Here we use induction on $i$, and assume
$ H_{G_n}^*(X(i))\cong 
H_{G_{i-1}}^*\otimes \Lambda(e_i).$

>From Lemma 3.2,  we still see
\[H_{G_n}^*(F(i)-F(i+1))\cong 
H_{G_{i}}^*\otimes \Lambda(f).\]
  By dimensional reason, we see
$ \delta(e_{i})=1\quad or\quad\delta(e_{i})=0.$

Now we consider the case $r\ge 2$ and
$mr<i\le mr+r-1.$
This case we still assume
\[ H_{G_i}^*\cong H_{G_{i-1}}^*\cong H_{G_{mr}}^*
\cong \bZ/\ell[c_r,...,c_{mr}]\otimes \Delta(e_r,...,e_{mr}).\]
Hence the  above exact sequence is written as
\[\to 
H^*_{G_{mr}}
\otimes\Lambda(f)
\stackrel{i_*}{\to}
 H^*_{G_n}(X(i+1))
\stackrel{j^*}{\to}
H^*_{G_{mr}}\otimes\Lambda(e_{i})\to ....\]

The $\ell$- Sylow subgroup of $G_i$ and $G_{i-1}$ are the same,
and hence $c_i=0$ in $H^*_{G_i}$. (See also Proposition 4.2 below.)
This means  
$  \delta(e_i)=1$ (Of course $\delta(1)=0$).

Hence we have the isomorphism
\[H_{G_n}^*(X(i+1))\cong  H^*_{G_{mr}}\{1,i_*(f)\}\cong H_{G_{mr}}^*\{1,e_{i+1}\}
\cong H_{G_i}^*\otimes \Lambda(e_{i+1}).\]
Other parts of the proof are almost the same as in the case $r=1$.
\end{proof}

\section{maximal torus and $SL_n$}

Let $r$ be the smallest positive integer such that
$q^r-1=0\ (mod(\ell))$.
Let $w=(1,2,..,r)\in S_r$
and $G_r=GL_r(\bF_q)=GL_r^F$ for the Frobenius map $F:x\mapsto x^q$.
For a matrix $A=(a_{i,j})\in GL_n$, the adjoint action is given as 
\[ad(w)F(A)=wFw^{-1}(a_{i,j})=(b_{i,j})\quad with \ b_{i,j}=a_{i-1,j-1}^q.\]
 Let  $T(w)$ be the maximal torus $T^*\subset GL_r$, for which the Frobenius is 
given as $ad(w)F$ (see the next section for details)
so that
\[T(w)^F=\{t\in T^*|ad(w)F(t)=t\}\]
 \[\cong \{x\in \bF_{q^r}^*|(x,x^q,...,x^{q^{r-1}})\in T^*\}\cong \bF_{q^r}^*.\] 
Take $H^*(BT^*)\cong \bZ/\ell[t_1,..,t_ r]$.  Let $i:T(w)^F\subset T^*$.
Then we can take the ring generator $t\in H^2(BT(w)^F)$ such that
$i^*t_i=q^{i-1}t$. 
\begin{lemma} The following map is injective
\[ H^*(BGL_r)/((q^i-1)c_i)\cong \bZ/{\ell}[c_r]\to H^*(BG_r).\]
\end{lemma}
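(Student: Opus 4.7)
The plan is to prove injectivity by restricting all the way down to the anisotropic maximal torus $T(w)^F$ set up in the section. Consider the commutative square
\[
\xymatrix{
H^*(BGL_r) \ar[r] \ar[d] & H^*(BG_r) \ar[d] \\
H^*(BT^*) \ar[r]^{i^*} & H^*(BT(w)^F)
}
\]
induced by the inclusions $T(w)^F \hookrightarrow T^*$ and $T(w)^F \hookrightarrow G_r$, together with naturality of the Lang-torsor maps on the $GL_r$ and $T$ sides. To prove the lemma it suffices to show that the outer composition $\bZ/\ell[c_r] \hookrightarrow H^*(BGL_r) \to H^*(BT(w)^F)$ is injective.

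The top Chern class $c_r \in H^{2r}(BGL_r)$ restricts to the top elementary symmetric polynomial $t_1 t_2 \cdots t_r$ in $H^*(BT^*) \cong \bZ/\ell[t_1,\ldots,t_r]$. Plugging in the formula $i^* t_j = q^{j-1}t$ stated just before the lemma, I obtain
\[
c_r \;\longmapsto\; \prod_{j=1}^{r} q^{j-1}\,t \;=\; q^{r(r-1)/2}\, t^{\,r} \;\in\; H^*(BT(w)^F).
\]
Since $p \neq \ell$ and $q = p^s$, the scalar $q^{r(r-1)/2}$ is a unit in $\bZ/\ell$, so the image is a nonzero scalar multiple of $t^r$.

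To finish, observe that $T(w)^F \cong \bF_{q^r}^*$ is cyclic of order $q^r-1$, and by the minimality of $r$ its $\ell$-Sylow subgroup is a nontrivial cyclic $\ell$-group. Hence $H^{\mathrm{even}}(BT(w)^F;\bZ/\ell)$ contains the polynomial subring $\bZ/\ell[t]$, and the images $q^{k r(r-1)/2}\, t^{rk}$ of the basis elements $c_r^k$ are all nonzero and lie in pairwise distinct even degrees. Consequently the outer composition is injective, and so is the map in the lemma.

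The one genuinely delicate point is the commutativity of the square, i.e.~that the etale-cohomological map $H^*(BGL_r) \to H^*(BG_r)$ induced by the Lang torsor is compatible with the analogous map for $T^*$ under inclusion of the tori, and that the Chern classes pull back to the elementary symmetric polynomials in the $t_j$. Both are standard facts, so once one grants the formula $i^* t_j = q^{j-1}t$ cited from the preceding paragraph of the paper, the argument reduces to the one-line computation $c_r \mapsto q^{r(r-1)/2} t^r$ displayed above.
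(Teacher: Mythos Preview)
Your proof is correct and follows essentially the same strategy as the paper: both restrict along $T(w)^F \hookrightarrow G_r$ and compute the image of $c_r$ in $H^*(BT(w)^F)$ using the formula $i^*t_j = q^{j-1}t$. The only difference is computational: the paper factors $(X-t)(X-qt)\cdots(X-q^{r-1}t) = X^r - t^r$ (using that $q$ is a primitive $r$-th root of unity mod~$\ell$) to read off all the $i^*c_i$ at once, whereas you compute $c_r = t_1\cdots t_r \mapsto q^{r(r-1)/2}t^r$ directly and invoke $p\neq\ell$ to see the coefficient is a unit; your route is marginally quicker here, while the paper's polynomial identity is what generalizes cleanly to Proposition~4.2.
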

\begin{proof}
It is enough to prove that for the  map 
\[i^*:H^*(BGL_r)\to H^*(BG_r)\to H^*(BT(w)^F)\cong H^*(\bF_{q^r}^*),\]
we can see $i^*c_1=...=i^*c_{r-1}=0$, and $ i^*c_r=(-1)^rt^r$.   

Let $s_i$ be  the $i$-th elementary symmetric function
of variables $t_1,...,t_r$, namely, 
\[(X-t_1)(X-t_2)...(X-t_r)=X^r+s_1X^{r-1}+...+s_r.\]
Since $i^*(t_i)=q^{i-1}t$,  we see that      
\[(X-t)(X-qt)...(X-q^{r-1}t)=X^n+i^*(s_1)X^{r-1}+...+i_*(s_r) .\]
On the other hand,  the polynomial $X^r-t^r$ has its roots  
$X=t,qt,...,q^{r-1}t$.  Hence we see that the above formula is 
$X^r-t^r.$  It implies the assertion above.
\end{proof}
\begin{prop} The following map is injective
\[ H^*(BGL_n)^F\cong \bZ/{\ell}[c_r,...,c_{[n/r]r}]\to H^*(BG_r).\]
\end{prop}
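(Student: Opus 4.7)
The plan is to generalize the argument of Lemma 4.1 from $GL_r$ to $GL_n$ by exhibiting a block-diagonal non-split maximal torus whose restriction map simultaneously detects every $c_{kr}$ with $1\le k\le m:=[n/r]$. Take $w\in S_n$ to be the product of $m$ disjoint $r$-cycles
\[(1\,2\,\cdots\,r)(r{+}1\,\cdots\,2r)\cdots((m{-}1)r{+}1\,\cdots\,mr),\]
acting as the identity on the remaining indices when $mr<n$. Twisting $F$ by $ad(w)$ on the standard torus $T^*\subset GL_n$ produces a maximal torus $T(w)\subset GL_n$ whose $F$-fixed subgroup lies inside $G_n=GL_n^F$ and decomposes as
\[T(w)^F \;\cong\; (\bF_{q^r}^*)^m \times (\bF_q^*)^{n-mr}.\]
Since $\ell\nmid q^i-1$ for $1\le i<r$, the second factor contributes only in degree $0$ to mod-$\ell$ cohomology, so $H^*(BT(w)^F;\bZ/\ell)$ is essentially the tensor product of $m$ copies of $H^*(B\bF_{q^r}^*;\bZ/\ell)$.

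Next I would analyze the composition
\[\phi : H^*(BGL_n)\cong \bZ/\ell[c_1,\ldots,c_n] \to H^*(BG_n) \to H^*(BT(w)^F).\]
Writing $H^*(BT^*)=\bZ/\ell[t_1,\ldots,t_n]$ and choosing a generator $u_j\in H^2(B\bF_{q^r}^*)$ for the $j$-th block, the very construction of $T(w)$ (the same blockwise Lang-map identification used in the proof of Lemma 4.1) forces $t_{(j-1)r+l}\mapsto q^{l-1}u_j$ for $1\le j\le m$, $1\le l\le r$, and $t_i\mapsto 0$ for $i>mr$. The total Chern polynomial therefore factors blockwise as
\[\prod_{i=1}^n (X+t_i) \;\longmapsto\; X^{n-mr}\prod_{j=1}^m\prod_{l=0}^{r-1}(X+q^l u_j) \;=\; X^{n-mr}\prod_{j=1}^m\bigl(X^r \pm u_j^r\bigr),\]
each inner product being precisely the polynomial computed in Lemma 4.1. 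Comparing coefficients of $X^{n-kr}$ shows that for $1\le k\le m$ the image of $c_{kr}$ is, up to a sign, the $k$-th elementary symmetric function $e_k(u_1^r,\ldots,u_m^r)$, while the images of all the other $c_i$ vanish.

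To finish, I would invoke the algebraic independence of the elementary symmetric functions $e_1(u_1^r,\ldots,u_m^r),\ldots,e_m(u_1^r,\ldots,u_m^r)$ in the polynomial ring $\bZ/\ell[u_1,\ldots,u_m]$. This forces $\phi$ to be injective on the subring $\bZ/\ell[c_r,c_{2r},\ldots,c_{mr}]=H^*(BGL_n)^F$, which in turn forces injectivity of the intermediate map $H^*(BGL_n)^F \to H^*(BG_n)$ asserted in the proposition. The degenerate case $r=1$ (where $m=n$ and $T(w)$ is the split torus) fits into exactly the same framework, but it also follows directly from Lemma 3.3, which already displays $\bZ/\ell[c_1,\ldots,c_n]$ as a subring of $H^*(BG_n)$.

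The main obstacle I expect is the coordinate bookkeeping: verifying that $T(w)^F$ really sits inside $G_n$ in the claimed block-diagonal form, and that the restriction $H^*(BT^*)\to H^*(BT(w)^F)$ acts block-by-block with no cross interference from the residual $(\bF_q^*)^{n-mr}$ factor. Once this setup is in place, the rest of the proof is just the single-block formula of Lemma 4.1 applied $m$ times, together with the classical algebraic independence of elementary symmetric polynomials.
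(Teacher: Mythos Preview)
Your proposal is correct and follows essentially the same route as the paper: choose $w$ to be the product of $[n/r]$ disjoint $r$-cycles, restrict to $T(w)^F\cong(\bF_{q^r}^*)^{[n/r]}$, and read off the images of the $c_{kr}$ from the blockwise factorization $\prod(X^r\pm u_j^r)$ established in Lemma~4.1. You are somewhat more explicit than the paper in two places---you track the residual $(\bF_q^*)^{n-mr}$ factor when $r\nmid n$ and you spell out the algebraic independence of $e_k(u_1^r,\ldots,u_m^r)$ to conclude injectivity---but these are elaborations of the same argument, not a different one.
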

\begin{proof}
Let $k=[n/r]$.  let us take
\[w=(1,...,r)(r+1,...,2r)...((k-1)r+1,...,kr).\]
We consider the map
\[i^*:H^*(BGL_n)\to H^*(BG_n)\to H^*(BT(w)^F)
      \cong H^*(B(\bF_{q^r}^*\times ... \times\bF_{q^r}^*)).\] 
We chose $t_i\in H^2(BT)$ ($1\le i\le n$) and 
$t_j'\in H^2(BT(w)^F)$ $(1\le j\le k$) such as
$i^*t_1=t_1', i^*t_2=qt_1',...$.
Then the arguments similar to the proof of the preceding lemma, we have
\[X^n+i^*(c_1)X^{r-1}+...+i_*(c_r)=(X^r\pm (t_1')^r)...(X^r\pm (t_k')^r) .\] 
Thus we get the result.
\end{proof}

Now we consider the case $G=SL_n$.
Write $SL_n(\bF_{q})$ by $SG_n$.
\begin{lemma} If $r\ge 2$, then, the following map is injective
\[ H^*(BSL_r)^F\cong \bZ/{\ell}[c_r]\to H^*(BSG_r).\]
\end{lemma}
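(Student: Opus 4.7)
The plan is to imitate the proof of Lemma~4.1, replacing $GL_r$ by $SL_r$ throughout and using the appropriate restriction of the twisted torus. I would take $w=(1,2,\ldots,r)$ the Coxeter element and let $T(w)\subset SL_r$ denote the intersection of the twisted torus of $GL_r$ (constructed in Section~4) with $SL_r$. Under the identification of $T(w)^F$ in $GL_r$ with $\bF_{q^r}^*$ via $x\mapsto(x,x^q,\ldots,x^{q^{r-1}})$, the subgroup $T(w)^F\cap SL_r$ is the kernel of $x\mapsto x^{q(r)}$, where $q(r)=1+q+\cdots+q^{r-1}=(q^r-1)/(q-1)$, so it is cyclic of order $q(r)$.

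Since $r\ge 2$ and $r$ is minimal with $q^r\equiv 1\pmod{\ell}$, we have $\ell\nmid q-1$ but $\ell\mid q^r-1$, hence $\ell\mid q(r)$. Therefore the $\ell$-Sylow subgroup of $T(w)^F\cap SL_r$ is nontrivial cyclic, and $H^*(B(T(w)^F\cap SL_r);\bZ/\ell)$ has a polynomial generator $t$ in degree $2$ all of whose powers are nonzero. I would then factor the map in the statement through this torus:
\[
H^*(BSL_r)^F\cong\bZ/\ell[c_r]\longrightarrow H^*(BSG_r)\longrightarrow H^*(B(T(w)^F\cap SL_r)),
\]
and, arguing exactly as in Lemma~4.1, use $i^*t_i=q^{i-1}t$ on the ambient $GL_r$-torus characters. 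Under the restriction $H^*(BSL_r)\hookrightarrow\bZ/\ell[t_1,\ldots,t_r]/(t_1+\cdots+t_r)$, the class $c_r$ becomes (up to sign) the product $t_1\cdots t_r$, so
\[
c_r\;\longmapsto\;\pm\,q^{r(r-1)/2}\,t^r,
\]
which is a unit multiple of $t^r$ since $p\ne\ell$. Because $t^r$ and all its higher powers are nonzero, the images of $c_r^k$ are all nonzero, and the composite (hence the map in the statement) is injective.

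The one delicate check I foresee is that the relation $t_1+\cdots+t_r=0$ cutting out $H^*(BT_{SL_r})$ pulls back consistently under the twisted-torus restriction: it becomes $(1+q+\cdots+q^{r-1})\,t=q(r)\cdot t$, and since $q(r)$ is the order of $T(w)^F\cap SL_r$ this class is annihilated in $\bZ/\ell$-cohomology, so there is no inconsistency. After this compatibility is confirmed, the argument is parallel to Lemma~4.1 and I do not expect further obstacles.
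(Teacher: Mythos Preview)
Your proposal is correct and follows essentially the same approach as the paper: both take the Coxeter element $w=(1,\ldots,r)$, identify $ST(w)^F\cong\bZ/q(r)$ via the norm-one condition $x^{q(r)}=1$, observe that $r\ge2$ forces $\ell\mid q(r)$ so that $H^*(B\bZ/q(r);\bZ/\ell)\cong\bZ/\ell[t]\otimes\Lambda(v)$, check that the relation $t_1+\cdots+t_r$ maps to $q(r)t=0$ in $\bZ/\ell$, and then conclude that $c_r$ restricts to a unit multiple of $t^r$. The only cosmetic difference is that you compute $i^*c_r=\pm q^{r(r-1)/2}t^r$ directly as a product, whereas the paper invokes the factorization $(X-t)(X-qt)\cdots(X-q^{r-1}t)=X^r-t^r$ from Lemma~4.1; these are equivalent.
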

 \begin{proof}
Let $w=(1,...,r)$ and recall $q(r)=1+q+...+q^{r-1}$.  Then
the maximal torus of $SG_r$ is written
\[ ST^*(w)^F\cong \{t\in F_{q^r}^*|(x,...,x^{q^{r-1}})\in T^*,\ x^{q(r)}=1\}
\cong \bZ/q(r).\]
We consider the map as the case $G_r$
\[i^*:H^*(BSL_r)\to H^*(BSG_r)\to H^*(BST(w)^F)\]
 \[     \cong H^*(B\bZ/q(r))\cong \bZ/\ell[t]\otimes \Lambda(v).\]
Let us write $H^*(BST^*)\cong \bZ/\ell[t_1,...,t_r]/(t_1+..+t_r)$.
Then we also see that $i^*(t_i)=q^{i-1}t$ ( note $\sum q^{i-1}=q(r)=0\in \bZ/\ell$).
The arguments in the proof of Lemma implies this lemma.
\end{proof}
\begin{prop} For the case $r\ge 2$, the following map is injective
\[ H^*(BGL_n)^F\cong \bZ/{\ell}[c_r,...,c_{[n/r]r}]\to H^*(BSG_n).\]
When $r=1$, the map $\bZ/\ell[c_2,...,c_n]\to H^*(BSG_n)$ is injective.
\end{prop}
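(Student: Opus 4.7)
The plan is to mimic the proof of Proposition 4.2 by replacing the maximal torus $T(w)^F$ of $G_n$ with the corresponding maximal torus $ST(w)^F$ of $SG_n$. Take the same element $w=(1,\ldots,r)(r+1,\ldots,2r)\cdots((k-1)r+1,\ldots,kr)\in S_n$ with $k=[n/r]$, so that $T(w)^F\cong (\bF_{q^r}^*)^k\times (\bF_q^*)^{n-kr}$, and let $ST(w)^F\subset T(w)^F$ be the subgroup cut out by the determinant condition $\prod_{i}x_i^{q(r)}\prod_j y_j=1$, computed cycle by cycle as in Lemma 4.3.

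For $r\ge 2$, the key observation is that the inclusion $ST(w)^F\hookrightarrow T(w)^F$ is an isomorphism on $\ell$-Sylow subgroups. Since $r$ is minimal with $\ell\mid q^r-1$, we have $\gcd(\ell, q-1)=1$, so the fixed-point factors $\bF_q^*$ contribute nothing mod $\ell$. Moreover $v_\ell(q(r))=v_\ell((q^r-1)/(q-1))=v_\ell(q^r-1)$, so $x\mapsto x^{q(r)}$ annihilates the $\ell$-Sylow of each $\bF_{q^r}^*$, which makes the determinant condition automatic on $\ell$-Sylows. Consequently the restriction $H^*(BT(w)^F;\bZ/\ell)\to H^*(BST(w)^F;\bZ/\ell)$ is an isomorphism, and the commutative diagram
\[
\begin{CD}
H^*(BG_n) @>>> H^*(BT(w)^F;\bZ/\ell) \\
@VVV @VV\cong V \\
H^*(BSG_n) @>>> H^*(BST(w)^F;\bZ/\ell)
\end{CD}
\]
together with the injectivity of the top row established in Proposition 4.2 yields the injectivity of $\bZ/\ell[c_r,\ldots,c_{[n/r]r}]\to H^*(BSG_n)$.

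For $r=1$, take $w=\mathrm{id}$, so that $T^F\cong(\bF_q^*)^n$ and $ST^F=\{(t_1,\ldots,t_n)\mid \prod t_i=1\}$. Now $\ell\mid q-1$, so $\bF_q^*$ has nontrivial $\ell$-part and the determinant genuinely reduces the torus; nevertheless $H^*(BST^F;\bZ/\ell)$ still contains the polynomial ring $\bZ/\ell[t_1,\ldots,t_n]/(t_1+\cdots+t_n)$ (tensored with an exterior factor). The Chern classes $c_i$ restrict to the elementary symmetric functions $s_i(t_1,\ldots,t_n)$, and since the invariant ring $(\bZ/\ell[t_1,\ldots,t_n]/(s_1))^{S_n}=\bZ/\ell[s_2,\ldots,s_n]$ is polynomial, the images of $c_2,\ldots,c_n$ are algebraically independent in $H^*(BSG_n)$.

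The main obstacle will be the arithmetic verification in the $r\ge 2$ case that $v_\ell(q(r))=v_\ell(q^r-1)$, which is what forces the determinant map to be trivial on $\ell$-Sylow subgroups and thus makes $ST(w)^F$ as good as $T(w)^F$ for mod $\ell$ detection; this is analogous to the computation $\sum q^{i-1}=q(r)\equiv 0\pmod\ell$ used in Lemma 4.3, but must be carried out carefully across multiple cycles and the additional $\bF_q^*$ factors coming from fixed points of $w$.
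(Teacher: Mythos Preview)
Your proof is correct and follows the same overall strategy as the paper---detect on the maximal torus $ST(w)^F$ of $SG_n$ associated to the Coxeter-type element $w$---but for $r\ge 2$ you take a cleaner shortcut. The paper simply redescribes $ST(w)^F$ explicitly and says ``we can get the result as the case $G_n$,'' meaning one should repeat the symmetric-function computation of Proposition~4.2 inside $H^*(BST(w)^F)$. You instead observe that since $\det:T(w)^F\to\bF_q^*$ lands in a group of order prime to $\ell$ (as $r\ge 2$ forces $\gcd(\ell,q-1)=1$), the inclusion $ST(w)^F\hookrightarrow T(w)^F$ is an isomorphism on $\ell$-Sylow subgroups, hence on mod~$\ell$ cohomology, and then invoke Proposition~4.2 directly via your commutative square. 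This avoids redoing the explicit factorization $(X^r\pm(t_1')^r)\cdots(X^r\pm(t_k')^r)$. Note that your ``main obstacle'' is actually easier than you suggest: you do not need $v_\ell(q(r))=v_\ell(q^r-1)$ per se, only that the target $\bF_q^*$ of the determinant has trivial $\ell$-part, which is immediate from $r\ge 2$. For $r=1$ your argument (restricting to the split torus and using that $s_2,\ldots,s_n$ remain algebraically independent modulo $s_1$) is essentially identical to the paper's one-line remark that $c_1=t_1+\cdots+t_n=0$ in $H^*(BST^*)$.
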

\begin{proof} 
The maximal torus of $SG_n$ is written
\[ ST^*(w)^F\cong \{t\in F_{q^r}^*|(x_1,...,x_1^{q^{r-1}},...,x_k,...,x_k^{q^{r-1}}))\in T^*,
\ (x_1...x_k)^{q(r)}=1\}.\]
We can get the result as  the case $G_n$.  When $r=1$, note that 
$c_1=t_1+...+t_n=0$ still in $H^*(BST^*)$.
\end{proof}
\begin{thm} For the case $r\ge 2$, we have the isomorphism
$H^*(BSG_n)\cong H^*(BG_n)$.
When $r=1$, we have
\[ H^*(BG_n;\bZ/{\ell})\cong \bZ/\ell[c_2,...,c_{n}]
\otimes \Delta(e_2,...,e_{n}).\]
\end{thm}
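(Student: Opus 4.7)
The plan is to rerun the stratification argument of Section~3 verbatim, now with $SG_n$ in place of $G_n$, using the injectivity from Proposition~4.4 to rule out nonzero connecting homomorphisms. I would proceed by induction on $n$, and the $r=1$ case drops out of the same induction because $SG_1=\{1\}$ eliminates the classes $c_1$ and $e_1$ from the base step.

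First I would establish the $SG_n$-analogs of Lemma~3.1 and Lemma~3.2. The first is immediate: the map $(x,t)\mapsto tx$ exhibits $X(1)\cong Q\times_{\mu_{q(n)}}\bG_m$ (since $k=\bar\bF_p$ and $\gcd(q(n),p)=1$, every element of $k^*$ is a $q(n)$-th power), and combining Lemma~2.3, Theorem~2.4, and Remark~2.1 yields
\[ H^*_{SG_n}(X(1))\cong H^*((Q/SG_n)\times_{\mu_{q(n)}}\bG_m)\cong H^*_{\mu_{q(n)}}(\bG_m)\cong\Lambda(f),\quad |f|=1. \]
The second requires identifying the $SG_n$-stabilizer of a codimension-$i$ coordinate subspace as $S(G_i\times G_{n-i})\ltimes U_{i,n-i}$, whose unipotent radical is a $p$-group and contributes trivially in mod-$\ell$ cohomology. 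For $r\ge 2$, the inclusion $S(G_i\times G_{n-i})\subset G_i\times G_{n-i}$ has index $q-1$ coprime to $\ell$, so transfer together with the inductive hypothesis $H^*(BSG_j)\cong H^*(BG_j)$ for $j<n$ identifies $H^*_{S(G_i\times G_{n-i})}(X(1)')$ with $H^*_{G_i}\otimes\Lambda(f)$, matching Lemma~3.2 exactly.

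With these two inputs in place, the long exact sequence argument of Lemma~3.3 and Theorem~3.4 applies directly. The only subtlety is controlling $\delta(e_{jr})$: for $G_n$ this was done by comparing Krull dimensions, but here I would invoke Proposition~4.4, which supplies a nonzero class $c_{jr}$ in $H^*(BSG_n)$ (resp.\ $c_j$ for $j\ge 2$ in the $r=1$ case) that must arise as $i_*(1)$, forcing $\delta(e_{jr})=0$. Iterating the short exact sequences assembles $H^*_{SG_n}(X(n+1))\cong H^*(BSG_n)$ into the claimed form, which in the $r\ge 2$ case matches $H^*(BG_n)$ term by term and in the $r=1$ case produces the ring $\bZ/\ell[c_2,\ldots,c_n]\otimes\Delta(e_2,\ldots,e_n)$ because the base step provides no $c_1,e_1$.

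The main obstacle I expect is the Levi analysis in the $r=1$ case, where $q-1$ is divisible by $\ell$ and the transfer argument collapses; there one must instead run the Lyndon--Hochschild--Serre spectral sequence for $1\to SG_i\times SG_{n-i}\to S(G_i\times G_{n-i})\to\bF_q^*\to 1$ and verify that the potentially troublesome $c_1,e_1$ classes are indeed absent, using that they vanish in $H^*(BSG_j)$ for $j<n$ by the inductive hypothesis and therefore cannot reappear at the $n$-th stage.
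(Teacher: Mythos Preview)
Your proposal is correct and follows the same overall strategy as the paper's outline: rerun the stratification argument of Section~3 for $SG_n$, replacing the Krull-dimension step by the injectivity of Proposition~4.4 to force $\delta(e_{jr})=0$. The only substantive difference is in how you handle the stabilizer $S(G_i\times G_{n-i})\ltimes U_{i,n-i}$. The paper observes that the split exact sequence
\[ 1\to SG_{n-i}\to S(G_i\times G_{n-i})\to G_i\to 1 \]
identifies the Levi with $G_i\ltimes SG_{n-i}$; one then feeds in the $SG_{n-i}$-analog of Lemma~3.1 (which you already established) to get $H^*_{SG_{n-i}}(X(1)')\cong\Lambda(f)$, and the outer $G_i$ contributes $H^*(BG_i)$. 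This works uniformly for all $r$, so the obstacle you anticipate at $r=1$ does not actually arise: the potentially troublesome $c_1,e_1$ classes never enter, because the inner factor is $SG_{n-i}$ rather than $G_{n-i}$, and the $SG$-version of Lemma~3.1 already yields only $\Lambda(f)$. Your transfer argument for $r\ge 2$ (index $q-1$ coprime to $\ell$) is a clean shortcut, but the paper's decomposition makes the separate $r=1$ spectral-sequence analysis you propose unnecessary.
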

\begin{proof}[An outline of the proof.]
Almost arguments work as the case $G_n$.  For example,
in the proof of Lemma 3.2,  for $G=G_n$, we showed
\[  F(i)-F(i+1)\cong G_n/(P_{i,n-i})\times (\bA^{n-i}-F(1)')\]
where $P_{i,n-i}$ is the parabolic subgroup
$(G_i\times G_{n-i})\ltimes U_{i,n-i}$.
We must consider the $SG_n$-version
\[ SG_n/S(G_i\times G_{n-i})\ltimes U_{i,n-i}(\bA^{n-i}-F(1)').\]
Here we can reduce  $S(G_i\times G_{n-i})$
to the case $G_i\ltimes SG_{n-i}$.
Then the inductive arguments work also this case.
\end{proof}

\section{motivic cohomology}

In this section, we consider the motivic version of preceding section.
Let us write
\[H^{*,*'}_G(X)=H^{*,*'}_G(X;\bZ/p)\]
the (equivariant) motivic
cohomology over the field $k=\bar \bF_p$.
Then we have the long exact sequence
\[\to H_{G_n}^{*-2i,*'-i}(F(i)-F(i+1))\stackrel{i_*}{\to} 
H_{G_n}^{*,*'}(X(i+1))\stackrel{j^*}{\to} H^{*,*'}_{G_n}(X(i))
\stackrel{\delta}{\to}.\]
However we note the following fact: the projection
\begin{align*}
V_{N}''\times_{G_{n-i}}(\bA^{n-i}-F(1)')\to &\bA^{n-i}-F(1)'/G_{n-i} \\
&\cong\bA^{n-i-1}\times_{\mu_{{q}^{n-i}-1}} \bG_{m}\to \bG_{m}/\mu_{{q}^{n-i}-1}\cong \bG_{m}
\end{align*}
is an $\bA^1$-homotopy equivalence when we replace $V_{N}''$ as a suitable large $G_{n-i}$-vector space. Then Lemma 3.2 holds for the motivic cohomology.
Then the most arguments in the preceding sections also work
for the motivic cohomology
with the degree
\[ deg(c_i)=(2i,i),\qquad deg(e_i)=(2i-1,i).\]
Thus we get Theorem 1.2 in the introduction.

\section{the Deligne-Lusztig theory}

Let $G$ be a connected reductive algebraic group defined over a finite field ${\bF}_q$, $q=p^r$, let $F\colon G\rightarrow G$ be the Frobenius map and let $G^F$ be the (finite) group of fixed points of $F$ in $G$. 

In the paper [De-Lu], Deligne and Lusztig
studied the representation theory of $G^F$ over fields of characteristic 0. The main idea is to construct such representations in the $\ell$-adic cohomology spaces of certain algebraic varieties $\tilde X(\dot w)$ over ${\bF}_q$, on which $G^F$ acts. 

Fix a Borel subgroup $B^*\subset G$ and a maximal ${\bF}_q$-split torus $T^*\subset B^*$, both defined over ${\bF}_q$. Let $W$ be the Weyl group of $T^*$ and 
\[G=\bigcup_{w\in W}B^*\dot wB^*\qquad  (disjoint\ union)\]
 be the Bruhat decomposition, $\dot w$ being a representative of $w\in W$ in the normalizer of $T^*$. Let $X$ be the variety of all Borel subgroups of $G$. This is a smooth scheme over ${\bF}_q$, on which the Frobenius element $F$ acts. Any $B\in X$ is of the form $B=gB^*g^{-1}=adgB^*$, where $g\in G$ is determined by $B$ up to right multiplication by an element of $B^*$. Let $X(w)\subset X$ be the locally closed subscheme consisting of all Borel subgroups $B=gB^*g^{-1}$ such that $g^{-1}F(g)\in B^*\dot wB^*$, namely,
 \[ (6.1)\quad X(w)=\{g\in G|g^{-1}F(g)\in B^*\dot wB^*\}/B^*\]
\[    \cong \{g\in G|g^{-1}F(g)\in \dot wB^*\}/(B^*\cap ad\dot wB^*).\]
( Borel groups $ad(g)B^*$ and $ad(g)FB^*$ are called in relative position $w$
if $g\in X(\dot w)$.)

 For any $w\in W$, let $T(w)$ be the torus $T^*$, for which the Frobenius 
map is given by $ad(w)F$
so that
\[(6.2)\quad T(w)^F=\{t\in T^*|ad(w)F(t)=t\}.\]
  Hence $T(w)^F$ is isomorphic to the set of ${\bF}_q$-points of a torus $T(w)\subset G$, defined over ${\bF}_q$. 

Let $U^*$ be the unipotent radical of $B^*$. For any $B\in X$ let $E(B)=\{g\in G|gB^*g^{-1}=B\}/U^*$. The Frobenius map induces a map $F\colon E(B)\rightarrow E(F(B))$. Let $E(B,\dot w)=\{u\in E(B)|F(u)=u\dot w\}$. For $B\in X(w)$ the 
sets $E(B,\dot w)$ are the fibers of a map $\pi\colon\tilde X(\dot w)\rightarrow X(w)$, where $\tilde X(\dot w)$ is a right principal homogeneous space of $T(w)^F$ over $X(w)$. The groups $G^F$ and $T(w)^F$ act on $\tilde X(\dot w)$ and these actions commute. 
Thus we have the isomorphism
\[ (6.3)\quad \tilde  X(\dot w)\cong
 \{g\in G|g^{-1}F(g)\in \dot wU^*\}/(U^*\cap ad\dot wU^*).\]

Now let $\ell$ be a prime distinct from $p$, and $\bQ_{\ell}$ be the algebraic closure of the field of $\ell$-adic numbers. Deligne-Lusztig  consider the actions of $G^F$ and $T(w)^F$ on the $\ell$-adic cohomology $H_c{}^*(\tilde X(\dot w),\bQ_{\ell})$ with compact support. For any $\theta\in {Hom}(T(w)^F,\bQ_{\ell})$,  let $H_c^*(\tilde X(\dot w),\bQ_{\ell})_\theta$ be the subspace of $H_c{}^\ast(\tilde X(\dot w),\bQ_{\ell})$ on which $T(w)^F$ acts by $\theta$. This is a $G^F$-module. 

The main subject of the paper [De-Lu] 
is the study of virtual representations $R^\theta(w)=\sum_i(-1)^iH_c{}^i(\tilde X(\dot w),\bQ_{\ell})_\theta$ (it can be shown that the right hand side is independent of the lifting $\dot w$ of $w$). 

\noindent {\bf Example.} (See 2.1 in [De-Lu].)
Let $V$ be an $n$-dimensional vector space over $k$ and put
$G=GL(V)$. We may take a basis such that a maximal torus $T\cong \bG_m^n$ and
the Weyl group $W\cong S_n$; the symmetric group of $n$-letters.
Then $X=G/B$ is the space of complete flags 
\[D \ : \ D_0=0\subset D_1\subset ...\subset D_{n-1}\subset D_n=V\]
with $dimD_i=i$.  The space $E=G/T$ is the space of complete flags marked by nonzero
vector $e_i\in D_i/D_{i-1}$,  where $T$ acts on $E$ by $(D,(e_i))(t_i)=(D,(t_ie_i))$.

Let $w=(1,...,n)$. Then two flags $D'$ and $D''$ are relative position $w$ 
(for details see 1.2 in [De-Lu])  if and only
if 
\[ D_i''+D_i'=D_{i+1}'\ (1\le i<n-1),\quad D_{n-1}''+D_1^i=V.\]
Hence $D$ and $FD$ are  in relative position $w$, if and only if
\[D_1\subset D_1+FD_1\subset D_1+FD_1+F^2D_1\subset ...\]
and $V=\oplus^{n+1}F^iD_1$.  A marking $e$ of $F$ is given such that
$F(e)=e\cdot \dot{w}$ if and only if
\[ e_2=F(e_1)(mod(e_1)),\ \ ...,\ \ e_n=F^{n-1}(e_1)(mod(e_1,...,F^{n-2}(e_1))\]
\[and \qquad e_1=F^n(e_1)(mod(e_1,...,F^{n-1}(e_1));\]
Hence the mark $e$ is defined by $e_1\in D_1$ with the condition that
\[F( e_1\wedge F(e_1)\wedge...\wedge F^{n-1}(e_1))=
(-1)^{n-1}(e_1\wedge F(e_1)\wedge...\wedge F^{n-1}(e_1)).\]
If $(x_i)$ are the coordinate of $e_1$,  the above condition can be
rewritten
\[(6.4)\quad (-1)^{n-1}(det(x_i^{q^{j-1}})_{1\le i,j\le n})^{q-1}=1.\]
Hence the map $(D_1,e_1)$ induces an isomorphism of
$\tilde X(\dot{w})$ with the affine hypersurface (6.4).
Note that this hypersurface is stable under $x\mapsto tx$ for $t\in F_{q^n}^*$,  and this is the action of $T(w)^F$.

Recall that $(det(x_i^{q^{j-1}})_{1\le i,j\le n})$ is written by $e_n$
in $\S 2$. Thus we have 
\begin{thm}
The variety $Q'$ in Theorem 2.5 in $\S 2$ is isomorphic to
$\tilde X(\dot w)$.
\end{thm}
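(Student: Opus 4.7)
The plan is to read off the result directly from the coordinate description of $\tilde X(\dot w)$ established in the Example preceding the statement, and then reconcile the sign $(-1)^{n-1}$ that appears in equation (6.4) versus the equation $e_n^{q-1}=1$ defining $Q'$.

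First I would invoke the Example: for $G=GL_n$ and $w=(1,\dots,n)$, the map sending a pair $(D_1,e_1)$ (a line together with a marking) to the coordinate vector of $e_1$ identifies $\tilde X(\dot w)$ with the affine hypersurface
\[
H=\{(x_1,\dots,x_n)\in\bA^n\mid (-1)^{n-1}(\det(x_i^{q^{j-1}}))^{q-1}=1\}.
\]
Since in the notation of $\S 2$ we have $e_n=\det(x_i^{q^{j-1}})$, this reads $(-1)^{n-1}e_n^{q-1}=1$. On the other hand $Q'=\{e_n^{q-1}=1\}$. Thus the theorem amounts to producing an isomorphism $H\cong Q'$.

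When $n$ is odd, or when $p=2$, one has $(-1)^{n-1}=1$ in $k$, so the defining equations of $H$ and $Q'$ coincide and there is nothing more to do. In the remaining case ($n$ even and $p$ odd), I would exhibit an explicit scaling automorphism of $\bA^n$ carrying $H$ to $Q'$. The point is that under $(x_1,\dots,x_n)\mapsto (\mu x_1,\dots,\mu x_n)$ the column $j$ of the matrix $(x_i^{q^{j-1}})$ acquires a common factor $\mu^{q^{j-1}}$, hence
\[
e_n(\mu x_1,\dots,\mu x_n)=\mu^{1+q+\cdots+q^{n-1}}\,e_n(x_1,\dots,x_n)=\mu^{q(n)}e_n,
\]
so $e_n^{q-1}$ is multiplied by $\mu^{q(n)(q-1)}=\mu^{q^n-1}$. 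Since $k=\bar\bF_p$ is algebraically closed and $\gcd(2(q^n-1),p)=1$, we may choose $\mu\in k^{*}$ with $\mu^{q^n-1}=-1$. This scaling then sends $H$ isomorphically onto $Q'$.

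There is no real obstacle here; the content of the theorem is carried by the identification of $\tilde X(\dot w)$ with the affine hypersurface (6.4), which is already handled in the Example. The only subtlety is the sign bookkeeping in even dimensions, which is dealt with by the diagonal scaling above. Thus Theorem~2.5 can be reinterpreted as the statement that the quotient $\tilde X(\dot w)/G_n$ is the affine space $\bA^{n-1}$, giving the alternative approach to the Deligne--Lusztig variety promised in the Introduction.
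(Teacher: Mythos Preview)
Your proposal is correct and follows the same route as the paper: Theorem 6.1 is stated immediately after the Example as its direct consequence, with no further argument beyond noting that $\det(x_i^{q^{j-1}})=e_n$. The paper itself does not address the sign $(-1)^{n-1}$; in fact it silently identifies the hypersurface (6.4) with $Q'$ (compare the definition of $Q'$ in the Introduction, which carries the sign, with the one in Theorem~2.5, which does not). Your scaling argument by a $(q^n-1)$-th root of $-1$ fills this small gap cleanly and makes the identification honest for even $n$ in odd characteristic. One cosmetic point: the condition $\gcd(2(q^n-1),p)=1$ is more than you need---algebraic closedness of $k$ alone guarantees a solution to $\mu^{q^n-1}=-1$.
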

In the next section, we will give a complete different proof of
the above theorem.

\section{The Deligne-Lusztig variety $\tilde X(\dot{w_n})$}
In 1.11.4 in [De-Lu],  Deligne and Lustig prove the following
theorem
\begin{thm}
\[G_n\setminus\tilde X(\dot w_n)\cong U^*/(U^*\cap ad(\dot w_n)U^*).\]
\end{thm}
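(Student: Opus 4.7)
Proof proposal.

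The plan is to combine the Lang--Steinberg theorem with the alternative presentation of $\tilde X(\dot w_n)$ recorded in (6.3), namely
\[
\tilde X(\dot w_n)\;\cong\; L^{-1}(\dot w_n U^*)\,/\,H,
\]
where $L(g)=g^{-1}F(g)$ is the Lang map and $H:=U^*\cap ad(\dot w_n)U^*$ acts on $L^{-1}(\dot w_n U^*)$ by right multiplication. Since the left $G_n=G^F$-action commutes with the right $H$-action, I may take the two quotients in either order, and I will quotient by $G_n$ first.

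The main engine is Lang--Steinberg: for $G=GL_n$ connected, $L\colon G\to G$ is a surjective etale $G^F$-torsor for left translation. Restricting $L$ to the preimage $L^{-1}(\dot w_n U^*)$ yields
\[
G_n\setminus L^{-1}(\dot w_n U^*)\;\stackrel{\sim}{\to}\;\dot w_n U^*,\qquad [g]\mapsto L(g),
\]
and the residual right action of $v\in H$ descends to $x\mapsto v^{-1}x\,F(v)$ on $\dot w_n U^*$. Translating by $\dot w_n^{-1}$ identifies $\dot w_n U^*\cong U^*$ and converts the action into the twisted action
\[
u\;\longmapsto\;\sigma(v)^{-1}\,u\,F(v),\qquad v\in H,
\]
on $U^*$, where $\sigma(v):=\dot w_n^{-1}v\dot w_n$ lies in $U^*$ precisely because $v\in ad(\dot w_n)U^*$.

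What remains is to identify the orbit space of this twisted action on $U^*$ with the coset space $U^*/H$. For this I would apply Lang--Steinberg a second time, now to the connected unipotent group $H$ itself; taking $\dot w_n$ to be the permutation matrix of the cycle $(1,\dots,n)$ makes $\dot w_n$ defined over the prime field, hence $H$ is $F$-stable. Lang--Steinberg on $H$ gives surjectivity of $v\mapsto v^{-1}F(v)$ on $H$, which lets one absorb the Frobenius twist $F(v)$ into an element of $H$ and reduce the twisted action to ordinary left multiplication of $H$ on $U^*$; the orbit space of the latter is tautologically $U^*/H$.

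The main obstacle is this last step: the twisted action mixes the conjugation $\sigma=ad(\dot w_n^{-1})$ with the Frobenius, and the absorption argument requires compatibility of the two on $H$. Concretely, one must check that the map $H\to H$, $v\mapsto \sigma(v)F(v)^{-1}$ (or a close variant), is surjective, and that its orbits on $U^*$ under left-right multiplication coincide with right $H$-cosets. For the Coxeter $w_n$ this is tractable because $U^*$ has a clean direct-product decomposition $U^*\cong H\cdot H^c$ into $H$ and a transversal $H^c$ corresponding to the $n-1$ entry positions that $ad(\dot w_n)$ moves out of $U^*$; $H^c$ is canonically isomorphic to $\mathbb A^{n-1}$, which matches the dimension of $G_n\setminus\tilde X(\dot w_n)$ and is consistent with Theorem 2.5.
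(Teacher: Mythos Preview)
Your first three paragraphs are correct and already prove the theorem --- but only once you read the statement as the paper intends. Immediately after stating Theorem 7.1 the paper specifies that ``the $InU^*$ action on $U^*$ is given by the following $\rho$'', namely $\rho(u)v=ad(\dot w_n^{-1})(u)\,v\,F(u^{-1})$. Thus the right-hand side $U^*/(U^*\cap ad(\dot w_n)U^*)$ denotes the quotient by this twisted action, \emph{not} the ordinary coset space. Your computation that the residual $H$-action on $\dot w_n U^*\cong U^*$ is $u\mapsto \sigma(v)^{-1}uF(v)$ is exactly $\rho$ (replace $v$ by $v^{-1}$), so at that point you are done. The ``last step'' you worry about --- reducing the twisted orbits to ordinary $H$-cosets --- is not part of the theorem, and your absorption sketch for it is not complete as written: surjectivity of $v\mapsto v^{-1}F(v)$ on $H$ does not by itself convert $\sigma(v)^{-1}uF(v)$ into $hu$ with $h$ ranging over $H$, since $\sigma$ and $F$ interact nontrivially with conjugation by $u$.

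Your route is genuinely different from the paper's. What you have reconstructed is essentially the original Deligne--Lusztig argument (cited as 1.11.4 in [De-Lu]): Lang--Steinberg on $G$ trivializes the $G^F$-quotient, and one is left with the twisted $H$-action on $U^*$. The paper deliberately replaces this by an explicit computation specific to $GL_n$: it realizes $\tilde X(\dot w_n)$ as the affine hypersurface $Q'=\{e_n^{q-1}=1\}$, writes the Lang map $x\mapsto \dot w_n^{-1}x^{-1}F(x)$ out in coordinates, and shows that its last column consists of the Dickson invariants $c_{n,1},\dots,c_{n,n-1}$ (Theorem 7.3); a direct reduction (Lemma 7.2) identifies $U^*/\rho(InU^*)$ with that last-column affine space, and surjectivity is obtained from a B\'ezout/regular-sequence argument (Lemma 7.4). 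Your approach is shorter and works for any connected reductive $G$; the paper's approach buys the explicit coordinate ring $k[c_{n,1},\dots,c_{n,n-1}]$ on the quotient, which is exactly what is needed later for Theorem 2.5 and the equivariant cohomology calculations.
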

We will give a complete different proof of the above theorem 
and Theorem 2.4 by using
Dickson invaraints for $G=GL_n(\bF_q)$ and $w_n=(1,...,n)$.

Take an adequate basis of the $n$ dimensional vector space
such that 
\[ w_n=\left( \begin{array}{cccc}
     0 & 0 & \ldots  & 1\\
     1 & 0 & \ldots  & 0\\
              \multicolumn{4}{c}{\dotfill} \\
     0 & \ldots & 1 & 0\\
     \end{array} \right),
\qquad U^*=\{ \left( \begin{array}{cccc}
     1 & * & \ldots  & *\\
     0 & 1 & \ldots  & *\\
      \multicolumn{4}{c}{\dotfill} \\
     0 &  \ldots & 0 &1
\end{array}\right) | *\in \bar F_p\}. \]
Let $x_{i,j}(a)=1+ae_{i,j}$ where $e_{i,j}$ is the elementary matrix with $1$ in 
$(i,j)$-entry and $0$ otherwise.  Then $U^*$ is generated by $x_{i,j}(a)$,
\[ U^*=\la x_{i,j}(a)| 1\le i<j \le n\ |\ a\in \bar F_p\ra\]
with the relation
\[ x_{i,j}(a)x_{i,j}(b)=x_{i,j}(a+b),\quad [x_{i,j}(a),x_{k,l}(b)]=\delta_{j,k}x_{i,l}(ab).\]
Note $ad(w)x_{i,j}(a)=wx_{i,j}(a)w^{-1}=x_{i+1,j+1}(a)$ identifying $i,j\in \bZ/n$.  Hence
\[InU^*=U^*\cap ad(w)U^*=\la x_{i,j}|x_{1,j}=0\ra\]
and $ad(w^{-1})InU^*=\la x_{i,j}|x_{i,n}=0\ra$,  that is
\[ InU^*=\left( \begin{array}{ccccc}
     1 & 0 & 0&\ldots  & 0\\
     0 & 1 &  *&\ldots & *\\
             \multicolumn{4}{c}{\dotfill}&* \\
     0 & 0 & \ldots  & 0&1\\
     \end{array} \right),
\qquad ad(w^{-1})InU^*= \left( \begin{array}{ccccc}
     1 & * & \ldots&*  & 0\\
     0 & 1 & \ldots&*  & 0\\
    \multicolumn{4}{c}{\dotfill} &\cdot \\
     0 & 0 & \ldots &0 &1
\end{array}\right) . \]

In Theorem 7.1, the $InU^*$ action on $U^*$ is given by the following
$\rho$ (see 1.11.4 in [De-Lu])
\[ \rho(u)v=ad(\dot w^{-1}_n)(u)vF(u^{-1})\quad for \ u\in InU^*,\ v\in U^*.\]
\begin{lemma} There is an isomorphism
\[U^*/\rho(InU^*)\cong \la x_{ij}(a)|x_{i,j}=0\ if \ j\not =n\ra\]
\[ =\{ \left( \begin{array}{cccccc}
     1 & 0 &\ldots  & 0&d_1\\
                \multicolumn{4}{c}{\dotfill}&* \\
       0 & 0&\ldots & 1&d_{n-1}\\
     0 & 0 & \ldots  & 0&1\\
     \end{array} \right)\in U^*\ |\ d_1,...,d_{n-1}\in \bar F_p\}.\]
     \end{lemma}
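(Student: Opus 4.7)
Write $S$ for the subgroup on the right-hand side; as a variety $S\cong\bA^{n-1}$, and
\[
\dim U^* \;=\; \binom{n}{2} \;=\; \binom{n-1}{2}+(n-1) \;=\; \dim InU^* + \dim S.
\]
The plan is to show that each $\rho(InU^*)$-orbit on $U^*$ meets $S$ in exactly one point, equivalently that the multiplication map $\mu\colon InU^*\times S\to U^*$, $(u,s)\mapsto \rho(u)s$, is bijective.

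The key single-generator computation: for $u=x_{i+1,j+1}(c)\in InU^*$ with $1\le i<j\le n-1$, using $ad(\dot w_n^{-1})u=x_{i,j}(c)$ and $F(u)^{-1}=x_{i+1,j+1}(-c^q)$, a direct matrix multiplication yields
\[
(\rho(u)v)_{ab}=v_{ab}+c\,\delta_{ai}v_{jb}-c^q\delta_{b,j+1}v_{a,i+1}-c^{q+1}\delta_{ai}\delta_{b,j+1}v_{j,i+1}.
\]
In particular the $(i,j)$-entry picks up $+c$, so the coefficient $c$ enters linearly in the entry to be killed.

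Filter $U^*$ by the normal subgroups $U^{(k)}=\{v:v_{ab}=0\text{ if }b-a<k\}$ for $k=1,\dots,n$, and eliminate super-diagonal $k$ inductively. On super-diagonal $k$ the candidates for elimination are $v_{i,i+k}$ with $1\le i\le n-k-1$, the entry $v_{n-k,n}$ being a slice coordinate; the available generators are $x_{i+1,i+k+1}(c_i)$ for $1\le i\le n-k-1$. These commute modulo $U^{(k+1)}$ because $[x_{i+1,i+k+1},x_{i'+1,i'+k+1}]\in U^{(2k)}\subset U^{(k+1)}$. Inspecting the displayed formula modulo $U^{(k+1)}$ gives the combined effect on super-diagonal $k$:
\[
v_{i,i+k}\mapsto v_{i,i+k}+c_i-c_{i-1}^q\;(c_0:=0,\;1\le i\le n-k-1),\qquad v_{n-k,n}\mapsto v_{n-k,n}-c_{n-k-1}^q,
\]
while every other modification lies in $U^{(k+1)}$. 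Setting $c_i=c_{i-1}^q-v_{i,i+k}$ solves this strictly triangular system uniquely and kills $v_{i,i+k}$ for $1\le i\le n-k-1$, modifying only the slice coordinate $v_{n-k,n}$ on super-diagonal $k$. Iterating over $k=1,2,\dots,n-1$ produces a unique $u\in InU^*$ with $\rho(u)v\in S$ and a unique image $s\in S$, establishing bijectivity of $\mu$.

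The main obstacle is verifying the filtration claim in detail: one must check that the various side effects of each elementary generator (the $c\,v_{jb}$ terms in row $i$ for $b\neq i+k$, the $c^q\,v_{a,i+1}$ terms in column $j+1$ for $a\notin\{i,i+1\}$, and the $c^{q+1}$ cross-term) indeed all land on super-diagonals of index $\ge k+1$, so that the elimination genuinely decouples level by level. The Frobenius twist $c\mapsto c^q$ introduces no solvability obstruction since $\bar\bF_p$ is perfect; it only makes the intra-super-diagonal coupling triangular rather than diagonal. Once these checks are in place, the inductive elimination yields the unique normal form, proving the lemma and, combined with Theorem 6.1 and Lemma 2.1, furnishing the alternative proof of Theorem 7.1 (and hence of Theorem 2.4).
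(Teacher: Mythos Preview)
Your proof is correct and follows the same elimination strategy as the paper --- use generators $x_{i+1,j+1}(c)\in InU^*$ to cancel the entries of $v$ lying outside the last column --- but organises the elimination differently. The paper orders the positive roots lexicographically (first by row, then by column), writes $v=\prod x_{k,l}(b_{k,l})$ in that order, and at each step kills the lex-minimal factor $(i_0,j_0)$ with $j_0<n$ by acting with $x_{i_0+1,j_0+1}(-b_{i_0,j_0})$; the commutator relation $[x_{i,j}(a),x_{k,l}(b)]=\delta_{j,k}x_{i,l}(ab)$ then guarantees that the minimal surviving root strictly increases. You instead filter by the lower-central-series subgroups $U^{(k)}$ and clear an entire super-diagonal at once, which requires your explicit entry-wise formula for $\rho(u)v$ and produces the coupled triangular system $c_i=c_{i-1}^q-v_{i,i+k}$. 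Both routes give existence; for uniqueness the paper just asserts that no nontrivial $\rho(u)$ carries $S$ into itself, whereas your triangular system makes this transparent (if $s\in S$ then on the lowest super-diagonal where $u$ is nontrivial the equations force all $c_i=0$). Your route costs a little more computation up front but makes the variety isomorphism more explicit, since the slice coordinates $d_{n-k}$ are visibly polynomial in the entries of $v$.
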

     \begin{proof}
    We consider the $\rho$-action when the case $u=x_{i,j}(a)$ and $v=x_{k,l}(b)$, namely,
    \[ \rho(u)v=ad(\dot w^{-1})(x_{ij}(a))x_{k,l}(b)F(x_{i,j}(a)^{-1})\]
    \[ =x_{i-1,j-1}(a)x_{k,l}(b)x_{i,j}(-a^q).\]

For roots $x_{i,j}$ and $x_{i',j'}$, we define an order $x_{i,j}<x_{i',j'}$
if $i<i'$ or $i=i'$, $j<j'$. Then  any $v\in U^*$ is uniquely written
by the product $\Pi x_{i,j}(b_{i,j})$ when we fix the above order in the product.
For any $a\in U^*$, let $x_{i_0,j_0}$ be the  minimal root of $v$
such that $x_{i_0,j_0}(b_{i_0,j_0})\not =0, j_0<n$.
   
   Take $i=i_0+1$, $j=j_0+1$ and $a=-b_{i_0,j_0}$.  Then the equation 
 \[ \rho(u)v=ad(\dot w^{-1})(x_{ij}(a))(\Pi x_{k,l}(b))F(x_{i,j}(a)^{-1})\]
    \[ =x_{i_0,j_0}(-b_{i_0,j_0})(\Pi x_{k,l}(b_{i,j}))x_{i_0+1,j_0+1}(-a^q)\]
\[   =(\Pi_{(i_0,j_0)<(k,l)} x_{k,l}(b_{i,j}))x_{i_0+1,j_0+1}(-a^q)\]
     implies that a nonzero minimal root of $\rho(u)v$ is larger than $(i_0,j_0)$.
   Repeating this process, there exists $u\in InU^*$ such that 
    $\rho(u)v=\Pi_{i=1}^{n-1}x_{i,n}(d_i)$.
    
 But all nonzero elements in the right hand side group in this lemma
  are not in $Im(\rho(u))v$ for $u\not =1$.  Hence we have the lemma.
\end{proof} 

Recall that we can identify
\[ Q'=\{x=\left( \begin{array}{cccc}
     x_1 & x_1^q& \ldots &  x_1^{q^{n-1}}\\
     x_2 & x_2^q & \ldots &  x_2^{q^{n-1}}\\
           \multicolumn{4}{c}{\dotfill} \\
              x_n & x_n^q & \ldots &  x_n^{q^{n-1}}
       \end{array} \right) \in GL_n| |x|^{q-1}=det(x)^{q-1}=1\}. \]
  \begin{thm}   We can define the map $f:Q'\to U^*/(\rho(InU^*)$ 
by $x\mapsto \dot w^{-1}_nx^{-1}Fx$, in fact,
\[ f(x) =\left( \begin{array}{cccccc}
     1 & 0 &\ldots  & 0& c_{n,1}\\
                \multicolumn{4}{c}{\dotfill}&* \\
       0 & 0&\ldots & 1&{c_{n,n-1}}\\
     0 & 0 & \ldots  & 0&1\\
     \end{array} \right) \]
     where $c_{n,i}=c_{n,i}(x_1,...,x_n)$ is the Dickson element defined in $\S 2$.
  This map also induces the isomorphism
\[ G_n\setminus Q'\cong U^*/(\rho(InU^*)) \cong   Spec(k[c_{n,1},...,c_{n,n-1}]) \qquad (so\ Q'\cong \tilde X(\dot w_n)).\] 
\end{thm}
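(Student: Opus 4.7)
The plan is to reduce the theorem to an explicit matrix computation, organized in three steps: (a) verify that $f(x)=\dot w_n^{-1}x^{-1}F(x)$ lies in $U^*$, (b) identify its last column with the Dickson invariants $c_{n,i}$, and (c) assemble the isomorphism using Lemma 7.2 and Theorem 2.5. For (a), the key observation is that right multiplication by $\dot w_n$ shifts the columns of $x$ cyclically (moving column $j+1$ into position $j$ for $j<n$, and column $1$ into position $n$), while $F$ raises each entry to the $q$-th power. Concretely, both $F(x)$ and $x\dot w_n$ have $j$-th column $(x_i^{q^j})_i$ for $j=1,\dots,n-1$, disagreeing only in position $n$. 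Therefore $M:=(x\dot w_n)^{-1}F(x)$ fixes the first $n-1$ standard basis vectors and differs from the identity only in its last column, so $M\in U^*$ precisely when its bottom-right entry is $1$.

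For (b), I would solve $(x\dot w_n)(d_1,\dots,d_n)^T=(x_i^{q^n})_i$ by Cramer's rule. For $1\le i\le n-1$ the numerator is the determinant of $x\dot w_n$ with its $i$-th column replaced by $(x_i^{q^n})_i$; reordering those columns into the ascending-exponent order $0,1,\dots,\widehat i,\dots,n$ that defines $c_{n,i}$ introduces a sign from an $(i+1)$-cycle, and together with $\det(x\dot w_n)=(-1)^{n-1}e_n$ this yields $d_i=\pm c_{n,i}$. For $d_n$ the numerator is $\det(F(x))=e_n^q$, giving $d_n=(-1)^{n-1}e_n^{q-1}$, which equals $1$ on $Q'$ by the defining relation in the sign convention $(-1)^{n-1}e_n^{q-1}=1$ of (6.4). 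Hence $f(x)\in U^*$, and its class in $U^*/\rho(InU^*)$ has the canonical form of Lemma 7.2 with the $\pm c_{n,i}$ as coordinates.

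For (c), $G_n$-invariance of $f$ is immediate: any $g\in G_n$ satisfies $F(g)=g$, so $(gx)^{-1}F(gx)=x^{-1}F(x)$. Thus $f$ descends to $\bar f\colon G_n\backslash Q'\to U^*/\rho(InU^*)$. Under Lemma 7.2's coordinates $(d_1,\dots,d_{n-1})$ on the target and Theorem 2.5's identification $Q'/G_n\cong \mathrm{Spec}(k[c_{n,1},\dots,c_{n,n-1}])\cong \bA^{n-1}$ on the source, $\bar f$ becomes the identity on $\bA^{n-1}$ up to signs on the coordinates, hence an isomorphism. The main obstacle will be the sign bookkeeping in step (b): the permutation signs must combine correctly both to match the exact Dickson formula claimed in the theorem statement and to force the bottom-right entry to be exactly $1$, which is what pins down the $(-1)^{n-1}$-twist in the defining equation of $Q'$.
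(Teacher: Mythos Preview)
Your matrix computation in steps (a)--(b) is essentially the paper's own: you work with $(x\dot w_n)^{-1}F(x)$ directly, while the paper first computes $x^{-1}F(x)$ entrywise via cofactors and then left-multiplies by $\dot w_n^{-1}$, but both arrive at the same Cramer-type identification of the last column with the $c_{n,i}$ (up to the sign bookkeeping you flag).

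The genuine divergence is in step (c). You invoke Theorem~2.5 to identify $G_n\backslash Q'$ with $\bA^{n-1}$ on the source side, so that $\bar f$ becomes a sign-twisted identity between two copies of $\bA^{n-1}$. The paper deliberately avoids Theorem~2.5 here: it instead shows $\bar f$ is separable (since the Lang map is), injective (via a commutative square embedding both sides into $G_n\backslash GL_n$), and surjective by a direct argument (Lemma~7.4) that uses B\'ezout's theorem in $\bP^n$ applied to the homogeneous regular sequence $(c_{n,0},c_{n,1},\dots,c_{n,n-1})$. The point is that \S7 is advertised as a \emph{second, independent} proof of Theorem~2.5; your route is logically sound but collapses that independence, since it takes Theorem~2.5 as input rather than recovering it as output. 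What your approach buys is brevity; what the paper's approach buys is the promised self-contained alternative to the Poincar\'e-series argument of \S2.
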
 
\begin{proof}      
 Let us write
\[e_n\left( \begin{array}{cccc}
     i_1 & i_2& \ldots &  i_n\\
    j_1 & j_2 & \ldots &  j_n\\
            \end{array} \right)
=\left| \begin{array}{cccc}
     x_{j_1}^{q^{i_1}} & x_{j_1}^{q^{i_2}}& \ldots &  x_{j_1}^{q^{i_n}}\\
     x_{j_2}^{q^{i_1} }& x_{j_2}^{q^{i_2}}& \ldots &  x_{j_2}^{q^{i_n}}\\
     
           \multicolumn{4}{c}{\dotfill} \\
          x_{j_n}^{q^{i_1}} & x_{j_n}^{q^{i_2}}& \ldots &  x_{j_n}^{q^{i_n}}\\  
                 \end{array} \right| \]     
 so that   $e_n\left( \begin{array}{cccc}
     0 & 1& \ldots &  n-1\\
    1 & 2 & \ldots &  n\\
            \end{array} \right)=e(x)=|x|.$    
Then the $(j,i)$ cofactor of the matrix $x$ is expressed as 
\[D_{j,i}=(-1)^{i+j}e_{n-1}\left( \begin{array}{cccccc}
     0 & 1& \ldots& \hat{i-1}& \ldots & n-1\\
     1 & 2& \ldots& \hat j& \ldots & n
            \end{array} \right).\]
            By Clamer's theorem, we know
            \[ x^{-1}=|x|^{-1}(D_{j,i})^t=|x|^{-1}(D_{i,j}).\]
            
            Let us write
            $(B_{i,j})=|x|x^{-1}F(x)$.
            Then we can compute
            \[ B_{s,t}=\sum D_{s,k}x(k,t)^q=\sum D_{s,k}x_k^{q^t}
              \qquad (where\ x(k,l)=(k,l)-entry \ of\ x)\]
              \[ =\left| \begin{array}{ccccc}
     x_1 &\ldots& \stackrel{s}{x_1^{q^t}}& \ldots &  x_1^{q^{n-1}}\\
     x_2 & \ldots&x_2^{q ^t}& \ldots &  x_2^{q^{n-1}}\\
           \multicolumn{5}{c}{\dotfill} \\
              x_n & \ldots &x_n^{q^t} & \ldots &  x_n^{q^{n-1}}
       \end{array} \right|.\]
       This element is nonzero only if $t=s-1$ or $t=n$.
       If $t=s-1$, then the above element is $|x|$.
       If $t=n$,  then the above element is indeed, $(-1)^{n-s}|x|c_{n,s-1}$ by the definition of
       the Dickson elements as stated in $\S 2$.
       Thus we have
       \[x^{-1}F(x)=|x|^{-1}(B_{st})=
       \left( \begin{array}{cccccc}
     0 & 0 &\ldots  & 0& c_{n,0}\\
     1 & 0&\ldots & 0&{c_{n,1}}\\
                \multicolumn{4}{c}{\dotfill}&* \\
      
     0 & 0 & \ldots  &1&c_{n,n-1}\\
     \end{array} \right). \]
     Here $c_{n,0}=1$ and   
       acting $\dot w^{-1}_n$, we have the desired result for $f(x)$.\\
       
       We will show that $f$ is an isomorphism.\\
       We note that $f$ is decomposed into
\[
	\xymatrix{
	G_n\backslash GL_n \ar[r]^-{\bar{L}} 
	& GL_n \ar[r]^-{\dot w_n^{-1}} 
	& G_n	\backslash GL_n \\
	G_n\backslash Q' \ar[rr]_{f} \ar[u]^-{incl.} &  
	& U^*/\rho(InU^*) \ar[u]_-{incl.}
	}
\]
       where $L(x)=x^{-1}F(x)$.
       
       Since the Lang map is separable, so is $f$.
       We see that $f$ is injective from the diagram. To show that $f$ is an isomorphism, it is enough to see that
       $f:Q'\to U^*/\rho(InU^*)$
       is surjective.\\
       When we consider $Q'$ as a subvariety of $\bA^{n}$, the above $f$ is identified with a map 
       $g|Q'$, where 
       $g:\bA^{n}\to\bA^{n-1}$
       is defined by
       $g(x)=(c_{n,1}(x),...,c_{n,n-1}(x)).$\\
       Then the surjectivity follows from the following lemma:
  
\end{proof} 

\begin{lemma}
Let $(f_{1},...,f_{n})$ be a homogeneous regular sequence of $k[x_{1},...,x_{n}]$. Then the associated map $f:\bA^{n}\to\bA^{n}$ is surjective. It means that\\
$f':V(f_{1}-a)\to\bA^{n-1}$ is surjective for $a$$\in$ k where $f'=pr(f|V(f_{1}-a))$where $pr:\bA^{n}\to\bA^{n-1}$ is the projection $pr(x_{1},...,x_{n})=(x_{1},...,x_{n-1})$
\end{lemma}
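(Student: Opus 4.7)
The plan is to reduce the statement about $f'$ to surjectivity of $f\colon \bA^{n}\to\bA^{n}$ itself, and then prove the latter by a standard finiteness argument for homogeneous regular sequences.

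First, I would prove that $f\colon\bA^{n}\to\bA^{n}$ is surjective. The key input is that since $(f_{1},\dots,f_{n})$ is a homogeneous regular sequence of length $n$ in the polynomial ring $R=k[x_{1},\dots,x_{n}]$ (which is Cohen--Macaulay of dimension $n$), it forms a homogeneous system of parameters. Hence the quotient $R/(f_{1},\dots,f_{n})$ is a finite-dimensional graded $k$-algebra, and by the graded Nakayama lemma $R$ is a free (in particular finite and flat) module over the subring $S=k[f_{1},\dots,f_{n}]\subset R$. The inclusion $S\hookrightarrow R$ is automatically injective (one checks, for example by comparing Hilbert series, that the $f_{i}$ are algebraically independent over $k$), so $S\cong k[y_{1},\dots,y_{n}]$ and the corresponding morphism $f\colon\bA^{n}\to\bA^{n}$ is finite, flat, and dominant. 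A finite dominant morphism of irreducible varieties is surjective (its image is closed and dense), so $f(\bA^{n})=\bA^{n}$.

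Next I would deduce the statement about $f'$. Fix $a\in k$ and an arbitrary point $(b_{2},\dots,b_{n})\in\bA^{n-1}$. By the surjectivity just established, there exists $x\in\bA^{n}$ with $f(x)=(a,b_{2},\dots,b_{n})$, i.e., $f_{1}(x)=a$ and $f_{i}(x)=b_{i}$ for $2\le i\le n$. The first equation says $x\in V(f_{1}-a)$, and then $f'(x)=(f_{2}(x),\dots,f_{n}(x))=(b_{2},\dots,b_{n})$, so $f'$ is surjective. (I am reading the projection $pr$ as forgetting the first coordinate, which is what is needed for the application to $Q'$: here $f_{1}=e_{n}^{q-1}$ with $a=1$, and $f_{2},\dots,f_{n}=c_{n,1},\dots,c_{n,n-1}$, so that $V(f_{1}-1)=Q'$ and $f'$ recovers the map $g|_{Q'}$ of Theorem 7.3.)

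The main obstacle is the first step, specifically justifying that a homogeneous regular sequence of length $n$ in $k[x_{1},\dots,x_{n}]$ makes $R$ a finite $S$-module. Once one invokes the Cohen--Macaulay property of $R$ together with graded Nakayama (equivalently, that the quotient by a regular sequence of maximal length is Artinian), finiteness, flatness, and surjectivity follow mechanically; without that commutative-algebra input it is not obvious that the fibres of $f$ over general points of $\bA^{n}$ are nonempty, even though the generic fibre has the right dimension. Everything else is formal.
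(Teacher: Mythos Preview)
Your argument is correct but takes a genuinely different route from the paper. The paper proves surjectivity of $f$ by a projective/B\'ezout argument: it compactifies $\bA^n\subset\bP^n$, observes that for any $\alpha\in\bA^n$ the system $f_i(u)-\alpha_i z^{d_i}=0$ ($i=1,\dots,n$) in $\bP^n$ has a common solution by B\'ezout, and then uses the homogeneous regular sequence hypothesis in the form $V(f_1,\dots,f_n)=\{0\}$ to rule out solutions on the hyperplane $z=0$ at infinity, forcing the solution to lie in the affine chart. You instead invoke commutative algebra: the regular sequence of maximal length makes $R/(f_1,\dots,f_n)$ Artinian, so by graded Nakayama $R$ is finite (indeed free) over $S=k[f_1,\dots,f_n]$, and a finite dominant morphism of irreducible varieties is surjective. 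Both approaches pivot on the same fact that $V(f_1,\dots,f_n)=\{0\}$, but package it differently. Your route yields a bit more (finiteness and flatness of $f$, not merely surjectivity) at the cost of citing the Cohen--Macaulay/graded Nakayama machinery; the paper's route is more geometric and uses only B\'ezout in $\bP^n$. The deduction of the $f'$ statement from surjectivity of $f$ is the same in spirit as the paper's, and your reading of the projection (forgetting the $f_1$-coordinate) matches what is needed for the application to $Q'$.
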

\begin{proof}
We consider the inclusion $i:\bA^{n}\subset\bP^{n}$ defined by 
$i(x_{1},...,x_{n})=[x_{1},...,x_{n},1]$, and denote the coordinate of $\bP^{n}$by$[u_{1},...u_{n},z]=[u,z]$.\\
We denote by$\tilde f:\bP^n\to \bP^n$the rational map extended from $f$ and denote by $d_{i}$ the degree of $f_{i}$.\\

For $\alpha \in \bA^{n} $,we see that $\tilde f^{-1}(\alpha)$is given by 
\[V_{+}(f_{1}(u)-\alpha_{1}z^{d_{1}},...,f_{n}(u)-\alpha_{n}z^{d_{n}}),when \alpha = (\alpha_{1},...,\alpha_{n}).\]\\
Then $\tilde f^{-1}(\alpha)\neq \phi$ by the Bezout theorem.
Since $(f_{1},...,f_{n})$ is a homogeneous regular sequence, we see that $V(f_1,...,f_n)=\{0\}$. It implies that
\[\tilde f^{-1}(\alpha)\cap V_{+}(z)=\{[u_{1},...,u_{n},1]|f_{1}(u)=\dots=f_{n}(u)=0\}=\phi.\]
We have $f^{-1}(\alpha)=\tilde f^{-1}(\alpha)\neq\phi$.Hence $f$ is surjective.\\

\end{proof}

Hence we know
\[\tilde X(\dot w)\cong \{(x_1,...,x_n)\in \bA^n|e(x_1,...,x_n)^{q-1}=|x|^{q-1}=1\}\]
\begin{thm} There is an isomorphism of varieties
\[X(1)\cong \tilde X(\dot w_n)\times_{T(\dot w_n)^F}\bG_m.\]
\end{thm}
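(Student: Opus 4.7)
The plan is to deduce this as a direct combination of two results already established in the paper: the $G_n$-version of Lemma 2.1 and Theorem 6.1. The $G_n$-version of Lemma 2.1 (invoked in the proof of Lemma 3.1) gives an isomorphism $X(1) \cong Q' \times_{\mu_{q^n-1}} \bG_m$ via the map $(x,t) \mapsto tx$; this works because $e_n(tx)^{q-1} = t^{q^n-1} e_n(x)^{q-1}$, so $t \in \mu_{q^n-1}$ preserves the defining equation $e_n^{q-1}=1$ of $Q'$. Theorem 6.1 identifies $Q' \cong \tilde X(\dot w_n)$ via the coordinates $(x_1,\dots,x_n)$ of the marking $e_1 \in D_1$.

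Next I would match the two group actions. From (6.2), applied to $w_n = (1,\dots,n)$, we have
\[ T(\dot w_n)^F \cong \{x \in \bF_{q^n}^* \mid (x, x^q, \dots, x^{q^{n-1}}) \in T^*\} \cong \bF_{q^n}^*,\]
which as a finite cyclic group of order $q^n-1$ is canonically $\mu_{q^n-1}$. The example at the end of \S 6 observes explicitly that the affine hypersurface (6.4) is stable under $x \mapsto tx$ for $t \in \bF_{q^n}^*$ and that \emph{this is the action of $T(\dot w_n)^F$}, so the two actions (the scalar $\mu_{q^n-1}$-action coming from Lemma 2.1 and the $T(\dot w_n)^F$-torsor action coming from the Deligne--Lusztig construction) coincide on $Q'$.

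Combining, one obtains
\[ X(1) \;\cong\; Q' \times_{\mu_{q^n-1}} \bG_m \;\cong\; \tilde X(\dot w_n) \times_{T(\dot w_n)^F} \bG_m,\]
which is the claimed isomorphism of varieties. I expect no substantial obstacle: the only nontrivial check is the compatibility of the two actions on $Q' \cong \tilde X(\dot w_n)$, and this follows transparently from the description of the $T(\dot w_n)^F$-torsor structure in terms of the marking $e_1$, since rescaling $e_1$ by $t$ rescales every coordinate $x_i$ by $t$.
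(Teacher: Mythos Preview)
Your proposal is correct and is essentially the same approach the paper takes: the paper does not write out a separate proof of this theorem, but states it immediately after re-establishing $\tilde X(\dot w_n)\cong Q'$ in \S7, so it is meant to follow by combining that identification with the $G_n$-version of Lemma~2.1 and the description of the $T(\dot w_n)^F\cong\bF_{q^n}^*\cong\mu_{q^n-1}$-action from the example in \S6, exactly as you do. The compatibility of actions that you single out is indeed the only point needing a word, and the paper handles it the same way (via the remark that the hypersurface is stable under $x\mapsto tx$ and that this is the $T(w)^F$-action).
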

%
%
\begin{cor} We have isomorphisms
\[G_n\backslash
X(1)\cong G_n\backslash
(\tilde X(\dot w_n)
\times_{T(\dot w)^F}\bG_m\cong \bA^{n-1}\times \bG_m.\]
\end{cor}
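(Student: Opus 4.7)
The plan is to unwind both sides using results already established: the first isomorphism is essentially immediate from the preceding Theorem 7.5, and the second is obtained by taking $G_n$-quotients of the known isomorphism $\tilde X(\dot w_n)\cong Q'$ from Theorem 7.4, using that the $G_n$-action and the torus action commute.

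First I would observe that the first isomorphism $G_n\backslash X(1)\cong G_n\backslash(\tilde X(\dot w_n)\times_{T(\dot w_n)^F}\bG_m)$ is obtained by passing to $G_n$-orbits in the isomorphism of Theorem 7.5, since the $G_n$-action on the left-hand side of that theorem corresponds to the left $G_n$-action on $\tilde X(\dot w_n)$ on the right-hand side, and $G_n$ acts trivially on the $\bG_m$ factor.

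Second, I would pass through Theorem 7.4 to identify $\tilde X(\dot w_n)\cong Q'$ and, under this identification, identify $T(\dot w_n)^F\cong \mu_{q^n-1}$ acting by scalar multiplication on $Q'$ (this is the action recorded in the Example at the end of $\S 6$: the hypersurface (6.4) is stable under $x\mapsto tx$ for $t\in \bF_{q^n}^*$, and this is precisely the $T(w)^F$-action). The key point, which is the $G_n$-analogue of Remark 2.1, is that the left $G_n$-action and the right $\mu_{q^n-1}$-action on $Q'$ commute: $(gx)\mu=g(x\mu)$ for $g\in G_n$, $\mu\in \mu_{q^n-1}$. Granting this, I can swap the order of quotients:
\[G_n\backslash(Q'\times_{\mu_{q^n-1}}\bG_m)\cong (G_n\backslash Q')\times_{\mu_{q^n-1}}\bG_m.\]

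Third, I would apply Theorem 2.5 to replace $G_n\backslash Q'$ by $\bA^{n-1}$. Since the $\mu_{q^n-1}$-action on $Q'$ descends to the trivial action on $G_n\backslash Q'\cong \bA^{n-1}$ (the Dickson invariants $c_{n,1},\ldots,c_{n,n-1}$ are all invariant under the scalar action because their degrees are divisible by $q-1$, and in fact they are literally $\mu_{q^n-1}$-invariant on $Q'$ as they cut out the quotient), we obtain
\[\bA^{n-1}\times_{\mu_{q^n-1}}\bG_m\cong \bA^{n-1}\times(\bG_m/\mu_{q^n-1})\cong \bA^{n-1}\times\bG_m,\]
using that $\bG_m/\mu_{q^n-1}\cong \bG_m$ via $t\mapsto t^{q^n-1}$.

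The main obstacle is the compatibility of the two actions: one must verify carefully that the left $G_n$-action (by matrix multiplication on $(x_1,\ldots,x_n)^t$) and the $\mu_{q^n-1}$-action (by scalar multiplication) on $Q'$ commute, and that the latter action descends trivially to $G_n\backslash Q'$. Both are straightforward — matrix multiplication commutes with scalars, and the Dickson invariants $c_{n,i}$ are $\mu_{q^n-1}$-invariant because $|c_{n,i}|=q^n-q^i$ is divisible by $q^n-1$ only in a twisted sense, so one must use that these generators actually come from the ring of genuine invariants — but this is the only nontrivial bookkeeping. Everything else is assembling previously proved statements.
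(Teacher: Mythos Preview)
Your overall architecture is right, but step 5 contains a genuine error. The $\mu_{q^n-1}$-action on $G_n\backslash Q'\cong\bA^{n-1}$ is \emph{not} trivial. The degree of $c_{n,i}$ is $q^n-q^i$, and for $1\le i\le n-1$ this is \emph{not} divisible by $q^n-1$: for $t\in\mu_{q^n-1}$ one has $t\cdot c_{n,i}=t^{q^n-q^i}c_{n,i}=t^{1-q^i}c_{n,i}$, which is nontrivial as soon as $q^i\not\equiv 1\pmod{q^n-1}$. Your parenthetical justifications (``divisible by $q-1$'', ``they cut out the quotient'') do not rescue this: divisibility by $q-1$ is irrelevant, and the $c_{n,i}$ generate the $G_n$-invariants, not the $\mu_{q^n-1}$-invariants.

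The conclusion $\bA^{n-1}\times_{\mu_{q^n-1}}\bG_m\cong\bA^{n-1}\times\bG_m$ is nonetheless correct, but for a different reason: the induced $\mu_{q^n-1}$-action on $\bA^{n-1}$ is linear (diagonal via characters $t\mapsto t^{1-q^i}$), and any such twisted product with $\mu_N$ acting freely on $\bG_m$ is trivializable. Concretely, if $t\cdot y_i=t^{a_i}y_i$ and $t\cdot s=ts$, then $z_i=y_i s^{-a_i}$ and $u=s^N$ are invariant and generate the quotient ring $k[z_1,\dots,z_{n-1},u^{\pm 1}]$, giving $\bA^{n-1}\times\bG_m$. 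Equivalently, the projection to $\bG_m/\mu_N\cong\bG_m$ exhibits the quotient as a vector bundle over $\bG_m$, which is necessarily trivial. Replace your step 5 with this untwisting and the argument goes through; the paper itself states the corollary without proof, relying on exactly this kind of routine trivialization (compare the cohomological version in the proof of Lemma 3.1, where only the homotopy type of $\bA^{n-1}$ matters).
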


\end{document}